\newcommand\RE{\mathbb{R}}
\newcommand\K{\mathbb{K}}
\newcommand\req[1]{~(\ref{eq:#1})}
\newcommand\nulla[1]{}
\newcommand\T{\mathcal{T}}
\newcommand\Vh{V_h^k}
\theoremstyle{remark}
\newtheorem{remark}{Remark}[section]
\theoremstyle{remark}
\newtheorem{test}{Test}[section]
\theoremstyle{plain}
\newtheorem{theorem}{Theorem}[section]
\author[1]{Francesca Gardini \thanks{francesca.gardini@unipv.it}}
\author[2]{Giuseppe Vacca  \thanks{giuseppe.vacca@unimib.it}}
\affil[1]{Dipartimento di Matematica ``F.~Casorati'',
Universit\`a degli Studi di Pavia, via Ferrata 1, I-27100 Pavia, Italy}
\affil[2]{Dipartimento di Matematica e Applicazioni,  Universit\`a degli Studi di Milan-Bicocca, Via Roberto Cozzi, 55 - 20125 Milano, Italy}
\title{Virtual Element Method for Second Order Elliptic Eigenvalue Problems}
\date{\today}
\begin{document}

\maketitle
\begin{abstract}
We study the virtual element (VEM) approximation of elliptic eigenvalue problems.
The main result of the paper states that VEM provides an optimal order approximation 
of the eigenmodes. A wide set of numerical tests confirm the theoretical analysis.
\end{abstract}


\maketitle

\section{Introduction}
The Virtual Element Method (VEM) is a brand new approximation technique 
introduced in~\cite{vem} which has been applied to several problems.  
In its abstract formulation the method is a generalization of the conforming finite element method 
which allows, nevertheless, the use of general polygonal and polyhedral meshes 
without having to integrate complex non-polynomial functions on the elements. 

The Virtual Element Method has been developed successfully for a large range of problems:  the linear elasticity problems, both for the compressible and the nearly incompressible case \cite{VEM-elasticity, paulinopost}, a stream and a non-conforming formulation
 of VEMs for the Stokes problem \cite{VEM-stream,cangiani2016non}, the non-linear elastic and inelastic deformation problems, mainly focusing on a small deformation regime \cite{BLM15}, the Darcy problem in mixed form \cite{VemMixed}, the plate bending problem \cite{Brezzi:Marini:plates}, the Steklov eigenvalue problem \cite{mora2015virtual}, the general second order elliptic problems in primal \cite{vemgeneral} and mixed form \cite{VEM-mixedgeneral}, the Cahn-Hilliard equation \cite{VEM-cahn}, the Helmholtz problem \cite{VEM-helmholtz},  the discrete fracture network simulations \cite{berrone, berrone2}, the time-dependent diffusion problems \cite{vaccabeirao, vacca2016virtual} and the Stokes problem \cite{VEM-divfree}.  In \cite{VEM-nonconforming,VEM-nonconforming1} the authors present a non-conforming Virtual Element Space. {\itshape{A posteriori}} error estimates are studied in~\cite{VEMaposteriori,cangiani2016posteriori,aposterioristeklov}. 
 $H(div)$ and $H(curl)$ VEM and VEM with arbitrary regularity are presented in~\cite{conforming} and~\cite{BM13}, respectively. 
 In~\cite{hpvem} the VEM $hp$ version is analyzed.
Finally, in \cite{VEM-serendipity, da2016serendipity} the authors introduce the last version of Virtual Element spaces, the Serendipity VEM spaces that, in analogy with the Serendipity FEMs, allows to reduce the number of degrees of freedom.

In this paper we study the Virtual Element Method applied to elliptic eigenvalue problems. 
As a model problem we consider the Laplace eigenvalue problem. Nevertheless the 
analysis generalizes straightforward to the case of more general second order 
elliptic eigenvalue problems. 
The discretization of the 
problem requires the introduction of two discrete bilinear forms, one being the 
approximated grad-grad form and the other being a discrete version of the $L^2$--inner product. 
The latter one is built using the techniques of~\cite{projectors}. In particular, we consider both a 
non-stabilized form and a stabilized one, and we study the convergence properties of 
the corresponding discrete formulations.
It is shown that the Virtual Element Method provides
optimal convergence rates both for the eigenfunctions and the eigenvalues. \\
The paper is organized as follows. In Section~\ref{sc:problem}, we set up the model eigenproblem, 
while in Section~\ref{sc:spaces}  
we recall the definition of the bi-dimensional and three-dimensional Virtual Element Spaces. 
In Section~\ref{sc:approx} we introduce the virtual element formulation of the problem, 
and in Section~\ref{sc:compact} we recall some 
fundamental results for the spectral approximation of compact operators. 
In Section~\ref{sc:analysis} we prove the main results of the paper, which consist in the a optimal {\itshape priori} error estimates for the VEM approximation of general elliptic eigenvalue problems. 
We discuss the implementation details in Section~\ref{se:experiments} and show the behaviour of the method for a set of numerical examples. We finally draw the conclusions in Section~\ref{sc:end}.

\section{Setting of the problem}
\label{sc:problem}

We are interested in the problem of computing the eigenvalues of the Laplace operator, namely \\ 
find $\lambda\in\RE$ such that there exists $u$, with $\|u\|_0=1$ satisfying
\begin{equation}\label{eq:eigPbmstrong}
\left\{
\begin{array}{ll}
-\Delta u=\lambda u\quad & \text{in }\Omega\\
u=0 & \text{on }\Gamma,
\end{array}
\right.
\end{equation}
where $\Omega\subset\RE^n\ (n=2,3)$ 
is a bounded polygonal/polyhedral domain with Lipschitz boundary $\Gamma$. 
 
For ease of exposition, we focus on the case of Dirichlet boundary conditions. The extension to other boundary conditions are analogous.

The variational formulation of problem\req{eigPbmstrong} reads:\\
\text{find  $\lambda\in\RE$ such that there exists $u\in V$, with $\|u\|_0=1$ satisfying}
\begin{equation}\label{eq:eigPbm}
a(u,v)=\lambda b(u,v)\quad\forall v\in V,
\end{equation}
where $V=H^1_0(\Omega)$, 
$a(u,v)=\int_{\Omega}\nabla u\cdot\nabla v$, and $b(\cdot,\cdot)$ denotes the 
$L^2$-inner product.\\
It is well-know that the eigenvalues of problem\req{eigPbm} form a positive increasing divergent sequence, 
and that the corresponding eigenfunctions are an orthonormal basis of $V$ with respect both to the $L^2$-inner product  
and to the scalar product associated with the bilinear form $a(\cdot,\cdot)$.\\
Due to regularity results~\cite{agmon}, 
there exists a constant $r > 1/2$ depending on $\Omega$, such that the solution $u$ 
belongs to the space $H^{1+r}(\Omega)$. It can be proved that $r$ is at least one if $\Omega$ is a convex domain, while $r$ 
is at least $\pi/\omega - \varepsilon$ for any $\varepsilon >0$ for a non--convex domain, being $\omega<2\pi$ the maximum interior angle of $\Omega$.

We will also need the source problem associated with the eigenvalue problem\req{eigPbm}: 
given $f\in L^2(\Omega)$, find $u^s\in V$ such that
\begin{equation}\label{eq:sourcePbm}
a(u^s,v)=b(f,v)\quad\forall v\in V.
\end{equation}

Throughout the paper, we will make use of the following notation. 
We will denote by $|\cdot|_{s,\omega}$ and $\|\cdot\|_{s,\omega}$ the seminorm and the norm in the Sobolev 
space $H^s(\omega)$, respectively, while $(\cdot,\cdot)_{\omega}$ will denote le $L^2$-inner product over 
the domain $\omega$. 
Moreover, if $\omega = \Omega$, the subscript $\omega$ may be omitted. 
For a positive integer $k$, $\mathbb{P}_k(\omega)$ 
will denote the space of polynomials on $\omega$ of degree at most $k$. Finally, $a^{\omega}(\cdot,\cdot)$ 
and $b^{\omega}(\cdot,\cdot)$
will denote respectively the restriction of the form $a(\cdot,\cdot)$  and $b(\cdot,\cdot)$ on $\omega$.

\section{Virtual Element Spaces} 
\label{sc:spaces}

In this section, we briefly recall the definition of the Virtual Element Spaces.  
 We present separately the the bi-dimensional and the three-dimensional case.

\subsection{Bi-dimensional case}
\label{sub:2d}
Let $\{\mathcal{T}_h\}_h$ be a sequence of decompositions of $\Omega$ into polygons $P$, and let 
$\mathcal{E}_h$ denote the set of edges $e$ of $\mathcal{T}_h$. For every element $P$, we denote by $|P|$ its area and 
by $h_P$ its diameter. Similarly, for each edge $e$, $|e|$ or, equivalentely, $h_e$ will denote its length. Depending on the context, $\partial P$ 
may denote the boundary of element $P$ or the set of the element edges. As usual, the mesh size $h$ is the maximum diameter of the elements $P$ in $\mathcal{T}_h$.\\
In accordance with~\cite{vem}, we assume the following mesh regularity condition: there exist a positive constant 
$\gamma$, independent of $h$, such that 
each element $P\in\T_h$ is star-shaped with respect to a ball of radius greater than $\gamma h_P$, and 
for every element $P$, and for every edge $e\subset\partial P$, $h_e\ge\gamma h_P$.

Following~\cite{vem,projectors}, for every integer $k\ge 1$ and for every element $P\in\mathcal{T}_h$ we define
$$
\tilde{V}_h^k(P):=\{v\in C^0(\partial P):  v|_e\in\mathbb{P}_k(e)\ \forall e\subset\partial P, \Delta v\in\mathbb{P}_k(P)\}.
$$ 
 For each $v$ in $\tilde{V}_h^k(P)$, we consider the  
following linear operators $\mathbf{D^{2D}}$ split into three sets:
\begin{itemize}
\item $\mathbf{D_1^{2D}}$: the values $v(V_i)$ at the vertices $V_i$ of $P$,

\item $\mathbf{D_2^{2D}}$: the scaled edge  moments up to order $k-2$ 
$$
\dfrac{1}{|e|}\displaystyle{\int_e} v\,m\,\text{d}s,\quad \forall m\in\mathcal{M}_{k-2}(e), \quad \text{on each edge $e$ of $P$,} 
$$
\item $\mathbf{D_3^{2D}}$: the scaled element moments up to order $k-2$ 
 $$\dfrac{1}{|P|}\displaystyle{\int_P} v\,m\,\text{d}\mathbf{x}, \quad \forall m\in\mathcal{M}_{k-2}(P),$$\\
\end{itemize}
where $\mathcal{M}_{k-2}(\omega)$ denotes the set of scaled monomials on $\omega$
$$
\mathcal{M}_{k-2}(\omega)=\Big\{ \Big( \dfrac{\mathbf{x}-\mathbf{x}_{\omega}}{h_{\omega}} \Big)^{\mathbf{s}}, |\mathbf{s}|\le k-2 \Big\}
$$
being $\mathbf{x}_{\omega}$ the barycenter of $\omega$, and where we set 
$\mathcal{M}_{-1}(\omega)= \emptyset$.

\begin{remark}
It can be proved that each scaled monomial in $\mathcal{M}_k(\omega)$, and also the 
linear operators  $\mathbf{D^{2D}}$, scales like $1$ 
(cf.~\cite{VemGuide} Remark 1.1., and Remark 2.5.)
\end{remark}

\begin{remark}
The  linear operators $\mathbf{D_3^{2D}}$ allows to exactly compute 
the $L^2(P)$-projection of any function in $\tilde{V}_h^k(P)$ onto the local space of piecewise polynomial of 
degrees at most $k-2$.  Indeed, given a function $v\in \tilde{V}_h^k(P)$, 
the $L^2(P)$-projection of $v$ is defined as the unique element $\Pi^P_{k-2}v\in\mathbb{P}_{k-2}(P)$ such that
\begin{equation}
\label{eq:proj}
\int_P( \Pi_{k-2}^Pv)\,p\,\text{d}\mathbf{x}=\int_Pv\,p\,\text{d}\mathbf{x}\quad\text{forall }p\in\mathcal{M}_{k-2}(P).
\end{equation}
Notice that~\req{proj} is a linear system with right hand side given by the  $\mathbf{D_3^{2D}}(v)$. 

We observe that, instead, the linear operators $\mathbf{D_3^{2D}}$ are not enough to compute the projection onto the space 
of piecewise polynomial of degree $k$ .
\end{remark}

From the linear operators $\mathbf{D^{2D}}$, on each element $P$ we can construct and exactly compute a projection operator
$\Pi_k^{\nabla}: \tilde{V}_h^k(P)\rightarrow\mathbb{P}_k(P)$ defined as follows:
\begin{equation}
\label{eq:pinabla}
a^P(\Pi_k^{\nabla}v-v, p)=0\quad\forall p\in\mathbb{P}_k(P)
\end{equation}
and 
\begin{equation}
\int_{\partial P}(\Pi_k^{\nabla}v-v)\text{d}s=0\quad\text{for } k=1
\end{equation}
or
\begin{equation}
\int_P (\Pi^{\nabla}_k v-v)\text{d}\mathbf{x}=0\quad\text{for } k\ge2.
\end{equation}
We observe that this operator is well-defined also for functions in $H^1(P)$, but in this case it is not exactly computable. 
On the other hand, for all $v\in\tilde{V}_h^k$, $\Pi_k^{\nabla}v$ can be computed 
only in terms of $\mathbf{D^{2D}}(v)$.

The local virtual space is then defined as
\begin{equation}
\label{eq:virtual2d}
V_h^k(P):=\Big\{v\in\tilde{V}_h^k(P): \int_Pv\,p\,\text{d}{\bf{x}}=\int_P (\Pi_k^{\nabla}v)p\,\text{d}{\bf{x}}\ 
\forall p\in(\mathbb{P}_k / \mathbb{P}_{k-2}(P))\Big\},
\end{equation}
where $(\mathbb{P}_k / \mathbb{P}_{k-2}(P))$ denotes the space of polynomials in $\mathbb{P}_k(P)$
$L^2$--orthogonal to all polynomials in $\mathbb{P}_{k-2}(P)$.\\
We recall that, by construction, the local space $V_h^k(P)$ enjoys the following fundamental properties (see~\cite{projectors}):
\begin{itemize}
\item $\mathbf{(P1)}$ the space $\mathbb{P}_k(P)\subset V_h^k(P)$. This property will guarantee the optimal order of approximation
\item  $\mathbf{(P2)}$  the set of linear operators $\mathbf{D^{2D}}$ constitute a set of degrees of freedom (DoFs) for the space $V_h^k(P)$
\item $\mathbf{(P3)}$ since $V_h^k(P)\subset\tilde{V}_h^k(P)$, the operator $\Pi_k^{\nabla}$ is well-defined on $V_h^k(P)$ and it is still computable in terms of the degrees of freedom
\item $\mathbf{(P4)}$ the standard $L^2$-projection operator $\Pi^0_{k}:V_h^k(P)\rightarrow\mathbb{P}_k(P)$ is computable only in terms of the degrees of freedom
\item $\mathbf{(P5)}$ for all $v\in V_h^k(P)$ the vectorvalued function $\Pi^0_{k-1}\nabla v_h$ can be explicitly computed from the degrees of freedom, see~\cite{vemgeneral}. 
\end{itemize}

The global bi-dimensional discrete space is hence defined in the finite element way as 
$$
V_h^{k,2\rm{D}}=\{v\in V: v|{_P}\in V_h^k(P)\quad\forall P\in\mathcal{T}_h\}.
$$

\subsection{Three-dimensional case}
The aim of this section is to briefly present  the extension of the Virtual Element spaces to the three-dimensional case, recalling  from \cite{projectors} the core idea of the three-dimensional VEM.

Let $\{\mathcal{T}_h\}_h$ be a sequence of decompositions of $\Omega$ into general polyhedral elements $P$. We assume that for all
$h$, each element $P \in \mathcal{T}_h$ fulfils the following assumptions: there exists a uniform positive constant $\gamma$  such that $P$ is star-shaped with respect to a sphere of radius greater than  $\gamma\, h_P$, and every face $f$ of $P$ is star-shaped with respect to a ball of radius greater than $\gamma \, h_f$, and
for every face $f$ of $P$ and for every edge $e$ of $f$, it holds that
$h_e \geq \gamma \, h_f \geq \gamma^2 \, h_P$,
where $h_f$ (resp. $h_e$) denotes the diameter of the face $f$ (resp. the length of the edge $e$).

Let $P$ in $\mathcal{T}_h$. We start by  defining the  virtual local-boundary space, observing that each face $f \in \partial P$ is a polygon. Let us define 
the following space
\begin{equation}
\label{eq:boundary3d}
\mathcal{B}_h^k(\partial P) := \{ v \in C^0(\partial P):  v|_f \in V_h^{k,\rm{2D}}(f)\ \forall f\subset\partial P\}.
\end{equation}

The above space is made of functions that on each face are two-dimensional
virtual functions, that glue continuously across edges. Once the boundary space is defined, the steps to follow in order to define the
local virtual space on $P$ become very similar to the two dimensional case. We
first introduce a preliminary local virtual element space on $P$
$$
\tilde{V}_h^k(P):=\{v\in H^1(P):  v|_{\partial P}\in\mathcal{B}_h^k(\partial P)\ , \Delta v\in\mathbb{P}_k(P)\}.
$$ 
Therefore, extending to the polyhedra the definition~$\eqref{eq:virtual2d}$, we can define the local virtual space
\begin{equation}
\label{eq:virtual3d}
V_h^k(P):=\Big\{v\in\tilde{V}_h^k(P): \int_P v\,p\,\text{d}{\bf{x}}=\int_P (\Pi_k^{\nabla}v)p\,\text{d}{\bf{x}}\ 
\forall p\in(\mathbb{P}_k / \mathbb{P}_{k-2}(P))\Big\},
\end{equation}
Now the degrees of freedom for the space $V_h^k(P)$ are the obvious three-dimensional counterpart of the DoFs of the bi-dimensional case. Let us define the linear operators $\mathbf{D^{3D}}$ split into three sets:
\begin{itemize}
\item $\mathbf{D_1^{3D}}$: the values $v(V_i)$ at the vertices $V_i$ of $P$,
\item $\mathbf{D_2^{3D}}$: the scaled edge  moments up to order $k-2$ 
$$
\dfrac{1}{|e|}\displaystyle{\int_e} v\,m\,\text{d}s,\quad \forall m\in\mathcal{M}_{k-2}(e), \quad \text{on each edge $e$ of $\partial P$,} 
$$
\item $\mathbf{D_3^{3D}}$: the scaled face moments up to order $k-2$ 
 $$\dfrac{1}{|f|}\displaystyle{\int_f} v\,m\,\text{d}\mathbf{x}, \quad \forall m\in\mathcal{M}_{k-2}(f), \quad \text{on each face $f$ of $\partial P$,} $$
\item $\mathbf{D_4^{3D}}$: the scaled element moments up to order $k-2$ 
 $$\dfrac{1}{|P|}\displaystyle{\int_K} v\,m\,\text{d}\mathbf{x}, \quad \forall m\in\mathcal{M}_{k-2}(P).$$  
\end{itemize}
From \cite{projectors} we have that the three-dimensional space $V_h^k(P)$ matches the three-dimensional counterpart of the properties $(\mathbf{P1})$, $(\mathbf{P2})$, $(\mathbf{P3})$, $(\mathbf{P4})$, $(\mathbf{P5})$.

Finally, the three-dimensional global virtual space $V_h^{k,\rm{3D}}$ is defined by using a standard
assembly procedure as in finite elements
$$
V_h^{k,3\rm{D}}=\{v\in V: v|{_P}\in V_h^k(P)\quad\forall P\in\mathcal{T}_h\}.
$$

\section{Virtual Element discretization}
\label{sc:approx}

This section is devoted to the virtual element discretization of the source and the eigenvalue problem. 
We underline that the analysis holds both in the two dimensional and the three dimensional case. 
Therefore, from now on, we do not make any distinction between the spaces in 
two and three dimensions,  
and we simply denote by $V_h^k$ the global VEM space of order $k$.

 The Virtual Element discretization of source problem\req{sourcePbm} reads
\begin{equation}
\label{eq:discreteSource}
\begin{cases}
\text{find } u_h^s\in V_h^k \text{ such that}\\
a_h(u_h^s,v_h)=\langle f_h,v_h \rangle\quad\forall v_h\in V_h^k,\\
\end{cases}
\end{equation}
where $\langle\cdot,\cdot\rangle$ denotes the duality pairing in $V_h^k$, and $f_h\in (V_h^k)'$. 
In particular, $$\langle f_h,v_h\rangle=\sum_{P\in\mathcal{T}_h}(\Pi^0_{k}f,v_h)_P.$$
The discrete bilinear form $a_h(\cdot,\cdot)$ splits as
\begin{equation}
a_h(u_h,v_h)=\displaystyle{\sum_{P\in\mathcal{T}_h}}a_h^P(u_h,v_h).
\end{equation}
with
\begin{equation}
\label{eq:discreteforms}
a_h^P(u_h,v_h)=a^P(\Pi_k^{\nabla} u_h, \Pi^{\nabla}_k v_h) 
+ S^P\Big((I-\Pi_k^{\nabla})u_h,(I-\Pi_k^{\nabla})v_h\Big).
\end{equation}
where $S^P(\cdot,\cdot)$ denotes any symmetric positive definite bilinear form on the element $P$ such that 
there exist two uniform positive constants $c_0$ and $c_1$ such that
$$
c_0a^P(v,v)\le S^P(v,v)\le c_1 a^P(v,v)\quad \forall v\in V_h^k(P) {\text{ with }}\Pi^{\nabla}_kv=0.
$$

\begin{remark}
\label{eq:scale1}
The above requirement means that the form $S^P(\cdot,\cdot)$ scales as $a^P(\cdot,\cdot)$, namely $S^P(\cdot,\cdot)\simeq h_P^{n-2}$, 
with $n=2$ in the bi-dimensional case and $n=3$ in the three-dimensional one.
\end{remark}

The choice of the discrete form $a_h(\cdot,\cdot)$ is driven by the need to satisfy the {\itshape{k-consistency}} and $stability$ properties, $i.e.$
\begin{itemize}
\item {\itshape $k$-consistency}: for all $v\in V_h^k$ and for all $p\in\mathbb{P}_k(P)$ it holds
$$
a_h^P(v,p)=a^P(v,p)
$$
\item {\itshape stability}: there exists two positive constants $\alpha_*,\ \alpha^*$, independent of $h$ and of $P$, such that 
$$
\alpha_*a^P(v,v)\le a_h^P(v,v)\le\alpha^*a^P(v,v)\quad\forall v\in\Vh.
$$
\end{itemize} 
In particular, the first term in\req{discreteforms} ensures $k$--$consistency$, while the second one 
{\itshape stability}. 

The following interpolation and approximation properties hold \cite{cangiani2016posteriori, brsc}. 
\begin{theorem}
\label{thm:interp}
There exists a constant $C$, depending only on the polynomial degree $k$ and the shape regurality $\gamma$, such that 
for every $s$ with $2\le s \le k+1$, for every $h$, for all $P\in\mathcal{T}_h$, 
and for every $w\in H^s(\Omega)$ there exists a 
$w_I\in V_h^k$ such that
\begin{equation}
\|w-w_I\|_{0,P}+h_P |w-w_I|_{1,P}\le Ch_P^s|w|_{s,P}.
\end{equation}
\end{theorem}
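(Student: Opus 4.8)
The plan is to reduce the global statement to a purely local construction on each element $P$, since the global interpolant is obtained by patching together local ones, and to exploit property $\mathbf{(P1)}$ (polynomials are contained in $V_h^k(P)$) together with a scaled Poincaré/Bramble--Hilbert argument on the reference configuration. First I would fix $P\in\mathcal{T}_h$ and $w\in H^s(\Omega)$, and define $w_I$ on $P$ by prescribing its degrees of freedom $\mathbf{D^{2D}}(w_I)$ (or $\mathbf{D^{3D}}(w_I)$ in three dimensions) to equal the corresponding values $\mathbf{D}(w)$ applied to $w$; by property $\mathbf{(P2)}$ this uniquely determines $w_I\in V_h^k(P)$, and because adjacent elements share the same vertex, edge, and face moments, the resulting global function lies in $V_h^k\subset V$, i.e. it is $H^1$-conforming. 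The point $s\ge 2$ is used here so that the traces of $w$ on edges (and, in 3D, the pointwise vertex values and face restrictions) are well defined and the moment-type DoFs make sense.

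Next I would estimate $\|w-w_I\|_{0,P}$ and $|w-w_I|_{1,P}$. The key observation is that the interpolation operator $w\mapsto w_I$ reproduces polynomials of degree $k$: if $q\in\mathbb{P}_k(P)$ then $q\in V_h^k(P)$ by $\mathbf{(P1)}$ and $q$ has the same DoFs as itself, hence $q_I=q$. Therefore, for any $q\in\mathbb{P}_k(P)$,
\begin{equation}
\|w-w_I\|_{0,P}+h_P|w-w_I|_{1,P}\le \|w-q\|_{0,P}+\|(w-q)_I\|_{0,P}+h_P\big(|w-q|_{1,P}+|(w-q)_I|_{1,P}\big).
\end{equation}
It then suffices to bound the interpolation operator: one shows $\|v_I\|_{0,P}+h_P|v_I|_{1,P}\le C(\|v\|_{0,P}+h_P|v|_{1,P}+\dots)$ for $v\in H^s(P)$, uniformly in $h_P$, by a standard scaling argument. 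Concretely, map $P$ to a reference element of unit size, where the DoFs scale like $1$ (the Remark after the definition of $\mathbf{D^{2D}}$), estimate the finite-dimensional object $\hat v_I$ in terms of the DoFs of $\hat v$ using norm equivalence on finite-dimensional spaces plus trace inequalities $\|\hat v\|_{L^\infty(\partial\hat P)}, \|\hat v\|_{L^2(\partial\hat P)}\lesssim \|\hat v\|_{H^s(\hat P)}$ (valid for $s>1$), and scale back; the mesh regularity constant $\gamma$ enters through the bounded number of vertices/edges/faces per element and through the equivalence of $h_P$, $h_e$, $h_f$. Finally, choosing $q$ to be the degree-$k$ averaged Taylor polynomial of $w$ on the star-shaped element $P$ and invoking the Bramble--Hilbert lemma gives $\|w-q\|_{0,P}+h_P|w-q|_{1,P}\le Ch_P^s|w|_{s,P}$, and combining the two bounds yields the claim.

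The main obstacle is the scaling/stability estimate for the local interpolation operator, i.e. controlling $\|v_I\|_{0,P}+h_P|v_I|_{1,P}$ by the natural norm of $v$ with an $h_P$-independent constant. This is delicate for two reasons: $V_h^k(P)$ consists of virtual (non-polynomial) functions, so one cannot simply invoke a reference finite element; instead one must use that $v_I$ solves a Laplace-type problem on $P$ with polynomial data of degree $k$ and piecewise-polynomial boundary data, and that such solutions are controlled by their DoFs via the enhancement constraint in\req{virtual2d}. Second, the argument must be uniform over the whole mesh family, which is where the star-shapedness with respect to a ball of radius $\gamma h_P$ and the comparability of edge/face/element diameters are essential — they guarantee uniform Poincaré, trace, and inverse constants on the reference configuration. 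Once this local estimate is in place, summing over $P\in\mathcal{T}_h$ (the bound is already elementwise) completes the proof; the cited references \cite{cangiani2016posteriori, brsc} carry out precisely this analysis.
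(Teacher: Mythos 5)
The paper does not prove this theorem at all: it is quoted as a known result with a pointer to the references given just above its statement, and your sketch follows exactly the route those references take (interpolant defined by matching degrees of freedom, polynomial reproduction via $\mathbf{(P1)}$--$\mathbf{(P2)}$, stability of the local interpolation operator under the mesh regularity assumptions, and Bramble--Hilbert on the star-shaped element). You also correctly identify the genuinely delicate step --- the $h$-uniform stability bound for the virtual (non-polynomial) interpolant, which cannot be obtained by a naive reference-element norm equivalence and instead uses the minimum-energy characterization of the local virtual functions --- so the proposal is sound and consistent with the cited proofs.
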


\begin{theorem}
\label{thm:approx}
There exists a constant $C$, depending only on the polynomial degree $k$ and the shape regurality $\gamma$, such that 
for every $s$ with $1\le s \le k+1$ and for every $w\in H^s(\Omega)$ there exists a 
$w_{\pi}\in\mathbb{P}_k(P)$ such that
\begin{equation}
\|w-w_{\pi}\|_{0,P}+h_P |w-w_{\pi}|_{1,P}\le Ch_P^s|w|_{s,P}.
\end{equation}
\end{theorem}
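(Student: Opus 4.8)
The plan is to take $w_\pi$ to be the averaged (Sobolev) Taylor polynomial of $w$ of degree $k$, built on the ball with respect to which $P$ is star-shaped, and then to invoke the classical Bramble--Hilbert/Dupont--Scott polynomial approximation estimates; see \cite{brsc}.

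First I would record that, by the mesh regularity assumption, $P$ is star-shaped with respect to a ball $B_P\subset P$ of radius $\rho_P\ge\gamma h_P$, so that the \emph{chunkiness parameter} $h_P/\rho_P$ of $P$ is bounded by $1/\gamma$ uniformly in $h$ and in $P\in\mathcal{T}_h$. I would then fix a nonnegative weight $\varphi\in C_0^\infty(B_P)$ of unit integral and set
\[
w_\pi(\mathbf{x}) := Q^k w(\mathbf{x}) = \int_{B_P}\varphi(\mathbf{y})\,T^k_{\mathbf{y}}w(\mathbf{x})\,\mathrm{d}\mathbf{y}\ \in\ \mathbb{P}_k(P),
\]
where $T^k_{\mathbf{y}}w$ is the Taylor polynomial of $w$ of degree $k$ centred at $\mathbf{y}$; this is well defined for $w\in H^1(P)$ and reproduces $\mathbb{P}_k(P)$.

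Next I would invoke the Dupont--Scott estimate on the star-shaped domain $P$: for every integer $s$ with $1\le s\le k+1$ and every $0\le m\le s$,
\[
|w-w_\pi|_{m,P} \le C\,h_P^{\,s-m}\,|w|_{s,P},
\]
with $C$ depending only on $n$, $k$ and the bound $1/\gamma$ on the chunkiness parameter --- hence only on $k$ and $\gamma$ --- and not on $h$ nor on the particular element, the powers of $h_P$ being forced by the dilation $\mathbf{x}\mapsto(\mathbf{x}-\mathbf{x}_P)/h_P$. A non-integer $s\in(1,k+1)$ is then covered by interpolating between the integer endpoints, which does not spoil the form of the constant. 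Taking $m=0$ and $m=1$ and adding the two resulting inequalities gives
\[
\|w-w_\pi\|_{0,P}+h_P\,|w-w_\pi|_{1,P}\le C\,h_P^{\,s}\,|w|_{s,P},
\]
i.e.\ the assertion.

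The only real obstacle is making the constant $C$ uniform in $h$ and in $P$, and this is precisely where the hypotheses enter, through the bound on the chunkiness parameter supplied by the mesh regularity assumption. Everything else is the standard averaged-Taylor-polynomial machinery, so I do not expect further difficulties.
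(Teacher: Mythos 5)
Your proposal is correct and coincides with what the paper does: the paper states Theorem~\ref{thm:approx} without proof, citing exactly the classical Dupont--Scott/Bramble--Hilbert averaged-Taylor-polynomial estimates on star-shaped domains (the reference \cite{brsc}), with the uniformity of the constant coming from the bound $h_P/\rho_P\le 1/\gamma$ on the chunkiness parameter guaranteed by the mesh regularity assumption. Your reconstruction of that argument, including the scaling to obtain the powers of $h_P$ and the treatment of non-integer $s$ by interpolation, is the standard and intended one.
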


In~\cite{projectors} it has been proved that the discrete problem\req{discreteSource} is well-posed and that 
the following optimal {\itshape{a priori}} error estimate holds.
\begin{theorem}
\label{thm:aprioriestimate}
Let $u$ be the solution of problem\req{sourcePbm} and $u_h\in\Vh$ be the solution of the discrete problem\req{discreteSource}, then for every approximation $u_I\in V_h^k$ of u and for every approximation $u_{\pi}$ 
of u that is piecewise in $\mathbb{P}_k$ it holds

\begin{equation}
\label{eq:errorestimates}
|u-u_h|_1\le C (|u-u_I|_1 + |u-u_{\pi}|_{1,h} + \mathcal{F}_h), 
\end{equation}
where $C$ is a positive constant depending only on $\alpha_*$ and $\alpha^*$, 
and for every h, $\mathcal{F}_h(\equiv \|f-f_h\|_{V'_h})$ is the smallest constant such that 
$$
b(f,v_h)-<f_h,v_h>\le\mathcal{F}_h|v_h|_1\quad\forall v_h\in V_h^k.
$$

\end{theorem}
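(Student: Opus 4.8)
The plan is to follow the standard VEM error analysis strategy, decomposing the error $u - u_h$ into interpolation, polynomial consistency, and data oscillation contributions by inserting suitable intermediate functions. First I would write $u - u_h = (u - u_I) + (u_I - u_h)$ and set $\delta_h := u_I - u_h \in V_h^k$, so that it suffices to bound $|\delta_h|_1$ in terms of the three quantities on the right-hand side. By the coercivity of $a_h(\cdot,\cdot)$ (which follows from the \emph{stability} property and the coercivity of $a(\cdot,\cdot)$ on $V$, hence Poincar\'e), one has $\alpha_* |\delta_h|_1^2 \lesssim a_h(\delta_h,\delta_h) = a_h(u_I,\delta_h) - a_h(u_h,\delta_h)$. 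The second term is simply $\langle f_h, \delta_h\rangle$ by the definition of the discrete source problem\req{discreteSource}.

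Next I would insert the piecewise polynomial $u_\pi$ element by element, using \emph{$k$-consistency}: on each $P$, $a_h^P(u_I,\delta_h) = a_h^P(u_I - u_\pi, \delta_h) + a_h^P(u_\pi,\delta_h)$, and since $u_\pi|_P \in \mathbb{P}_k(P)$ the last term equals $a^P(u_\pi,\delta_h)$. Then I would replace $u_\pi$ back by $u$ inside $a^P$, writing $a^P(u_\pi,\delta_h) = a^P(u_\pi - u, \delta_h) + a^P(u,\delta_h)$, and use the continuous source problem\req{sourcePbm} to rewrite $\sum_P a^P(u,\delta_h) = a(u,\delta_h) = b(f,\delta_h)$. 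Collecting everything, $a_h(\delta_h,\delta_h)$ is controlled by a sum of terms of the form $a_h^P(u_I - u_\pi,\delta_h)$, $a^P(u_\pi - u,\delta_h)$, and $b(f,\delta_h) - \langle f_h,\delta_h\rangle$. Applying Cauchy--Schwarz on each element together with the upper bounds in \emph{stability} and in the requirement on $S^P$, each bilinear-form term is bounded by (respectively) $|u_I - u_\pi|_{1,h}$, $|u - u_\pi|_{1,h}$ times $|\delta_h|_1$, while the data term is bounded by $\mathcal{F}_h |\delta_h|_1$ by the very definition of $\mathcal{F}_h$. Dividing through by $|\delta_h|_1$ gives $|\delta_h|_1 \lesssim |u_I - u_\pi|_{1,h} + |u - u_\pi|_{1,h} + \mathcal{F}_h$, and a triangle inequality $|u_I - u_\pi|_{1,h} \le |u - u_I|_1 + |u - u_\pi|_{1,h}$ together with $|u-u_h|_1 \le |u-u_I|_1 + |\delta_h|_1$ closes the estimate\req{errorestimates}.

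The step I expect to require the most care is the treatment of the stabilization term $S^P\big((I-\Pi_k^\nabla)u_I,(I-\Pi_k^\nabla)\delta_h\big)$ that appears inside $a_h^P(u_I - u_\pi,\delta_h)$: one must observe that $(I - \Pi_k^\nabla)$ annihilates polynomials, so $(I-\Pi_k^\nabla)(u_I - u_\pi) = (I-\Pi_k^\nabla)(u_I - u) + (I-\Pi_k^\nabla)(u - u_\pi)$ is controlled, after invoking the scaling bound $S^P(v,v) \le c_1 a^P(v,v)$ and the $H^1$-boundedness of $\Pi_k^\nabla$, by $|u - u_I|_{1,P} + |u - u_\pi|_{1,P}$; here $u_\pi$ need not belong to $V_h^k$, so one is applying $S^P$ only to virtual functions, which is exactly what the hypothesis permits after this polynomial subtraction. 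The remaining ingredients — coercivity of $a_h$, boundedness of $\Pi_k^\nabla$ in the $H^1$-seminorm, and the observation that all hidden constants depend only on $\alpha_*,\alpha^*,c_0,c_1$ and the shape-regularity $\gamma$ — are routine. This is precisely the argument of \cite{vem,projectors}, adapted to account for the inexact right-hand side through the term $\mathcal{F}_h$.
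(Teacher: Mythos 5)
The paper itself does not prove Theorem~\ref{thm:aprioriestimate}: it is quoted from~\cite{projectors} (see also~\cite{vem}), so there is no internal proof to compare against. Your argument is exactly the standard one from those references --- coercivity of $a_h(\cdot,\cdot)$ applied to $\delta_h=u_I-u_h$, insertion of $u_\pi$ elementwise via \emph{$k$-consistency}, return to $u$ via the continuous problem\req{sourcePbm}, and the definition of $\mathcal{F}_h$ for the data term --- and the main line is correct, with all constants depending only on $\alpha_*$ and $\alpha^*$ as claimed.

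The one step that needs repair is your treatment of the stabilization term, which you yourself flag as delicate. You propose to split $(I-\Pi_k^{\nabla})(u_I-u_\pi)$ as $(I-\Pi_k^{\nabla})(u_I-u)+(I-\Pi_k^{\nabla})(u-u_\pi)$ and then invoke the bound $S^P(v,v)\le c_1\,a^P(v,v)$; but that hypothesis is stated only for $v\in V_h^k(P)$ with $\Pi_k^{\nabla}v=0$, and neither $(I-\Pi_k^{\nabla})(u_I-u)$ nor $(I-\Pi_k^{\nabla})(u-u_\pi)$ is a virtual function, since $u\notin V_h^k$. Your stated justification ("one is applying $S^P$ only to virtual functions") therefore does not hold for the pieces of your splitting. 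The splitting is in fact unnecessary: by property $\mathbf{(P1)}$ one has $u_\pi|_P\in\mathbb{P}_k(P)\subset V_h^k(P)$, hence $u_I-u_\pi\in V_h^k(P)$, and the \emph{stability} property together with the Cauchy--Schwarz inequality for the symmetric positive semidefinite form $a_h^P(\cdot,\cdot)$ gives directly
\begin{equation*}
a_h^P(u_I-u_\pi,\delta_h)\le a_h^P(u_I-u_\pi,u_I-u_\pi)^{1/2}\,a_h^P(\delta_h,\delta_h)^{1/2}\le \alpha^*\,|u_I-u_\pi|_{1,P}\,|\delta_h|_{1,P},
\end{equation*}
after which the triangle inequality $|u_I-u_\pi|_{1,P}\le|u-u_I|_{1,P}+|u-u_\pi|_{1,P}$ is applied to the seminorm, not inside $S^P$. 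With that correction your proof is complete and coincides with the one in the cited references.
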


\begin{remark}
We observe that the same result holds also for 
general linear second order elliptic problems, 
provided the form $a(\cdot,\cdot)$ is choosen as in~\req{ahgeneral}. 
 We refer to~\cite{vemgeneral} for an exaustive analysis. 
\end{remark}

\begin{remark}
\label{eq:estimate}
The interpolation and the approximation estimates in Theorems~\ref{thm:interp} and~\ref{thm:approx}, 
the definition of $<f_h,v>$, and the stability of the continuous source problem 
yield that, for $f\in L^2(\Omega)$, the a priori error estimate in Theorem~\ref{thm:aprioriestimate} 
becomes  
\begin{equation}
|u-u_h|_1\le C (h^t|u|_{1+t} + h\|f\|_0)\le Ch^t\|f||_0,
\end{equation}
where $t=\min\{k,r\}$, being $k$ the polynomial degree and $r$ the regularity index of the solution $u$.
\end{remark}

We are now ready to write the VEM approximation of problem\req{eigPbm}:\\
find $\lambda_h\in\RE$ such that there exists $u_h\in V_h^k$, with $\|u_h\|_0=1$ satisfying
\begin{equation}\label{eq:discreteEigPbm}
a_h(u_h,v_h)=\lambda_h b_h(u_h,v_h)\quad\forall v_h\in V_h^k,
\end{equation}
where 
$b_h(\cdot,\cdot)=\sum_{P\in\mathcal{T}_h} b_h^P(\cdot,\cdot)$ 
is a symmetric bilinear forms defined on $V_h^k \times V_h^k$.\\
Two possible choices for the discrete form $b_h(\cdot,\cdot)$ are available. 
The first one is inspired by the virtual approximation 
of the load term in the source problem\req{discreteSource} and reads as follows:
\begin{equation}
\label{eq:discreteb}
b_h^P(u_h,v_h)=\int_P\Pi^0_{k}u_h\Pi^0_{k}v_h \, \text{d}\mathbf{x}.
\end{equation}
The second possible choice consists in considering a discrete bilinear form $\tilde{b}_h(\cdot,\cdot)$ which 
enjoys not only the $k$--$consistency$ property, but also the $stability$ one. 
In this case, as done for the discrete form $a_h(\cdot,\cdot)$, we define 
\begin{equation}
\label{eq:discretebstab}
\tilde{b}_h^P(u_h,v_h)=\int_P\Pi^0_ku_h\Pi^0_kv_h \, \text{d}\mathbf{x} + \tilde{S}^P\Big((I-\Pi_{k}^0)u_h,(I-\Pi_{k}^0)v_h\Big),
\end{equation}
where $\tilde{S}^P$ is any positive definite bilinear form on the element $P$ such that there exist two uniform positive 
constants $\tilde{c}_0$ and $\tilde{c}_{1}$ such that
$$
\tilde{c}_0 \, b^P(v,v)\le\tilde{S}^P(v,v)\le\tilde{c}_{1} \, b^P(v,v)\quad\forall v\in V_h^k(P) \text{ with }\Pi_k^0v=0.
$$

\begin{remark}
\label{eq:scale2}
In analogy with the condition on the form $S^P(\cdot,\cdot)$, 
we require that the form $\tilde{S}^P(\cdot,\cdot)$ scales like $b^P(\cdot,\cdot)$, that is $\tilde{S}^P(\cdot,\cdot)\simeq h^n$, 
with $n=2$, and $n=3$ in the bi-dimensional and in the three-dimensional case, respectively. 
\end{remark}

\begin{remark}
In the definition of the discrete bilinear forms $b_h(\cdot,\cdot)$ and $\tilde{b}_h(\cdot,\cdot)$, 
we project onto the space $\mathbb{P}_k(P)$ since it has been numerically observed that this gives more accurate  results. 
For sure, this choice does not provide a better convergence rate, due to the {\itshape{k-consistency}} property.
\end{remark}

The second VEM approximation of problem~\req{eigPbm} then reads as\\ 
find $\tilde{\lambda}_h\in\RE$ such that there exists $\tilde{u}_h\in V_h^k$, with $\|\tilde{u}_{h}\|_0=1$ satisfying
\begin{equation}\label{eq:discreteEigPbm2}
a_h(\tilde{u}_h,v_h)=\tilde{\lambda}_h \tilde{b}_h(\tilde{u}_h,v_h)\quad\forall v_h\in V_h^k.
\end{equation}

In what follows, we will also need the discrete source problem corresponding 
to the second discrete formulation~\req{discreteEigPbm2}, which reads as:
\begin{equation}
\label{eq:discreteSource2}
\begin{cases}
\text{find } \tilde{u}_h^s\in V_h^k \text{ such that}:\\
a_h(\tilde{u}_h^s,v_h)=\tilde{b}_h(f,v_h)\quad\forall v_h\in V_h^k.\\
\end{cases}
\end{equation}

The well--posedness of the discrete formulation~\req{discreteSource2} stems from that of 
the discrete formulation~\req{discreteSource}, since the bilinear form $a_h(\cdot,\cdot)$ 
is coercive (due to the {\itshape stability} property). 

Summarizing, we consider two different discrete approximation of problems~\req{eigPbm} 
and~\req{sourcePbm}, with at the right hand side a non--stabilized bilinear form  
and a stablized one, respectively.

\section{Spectral approximation for compact operators}
\label{sc:compact}

In this section, we briefly recall some spectral approximation results 
that can be deduced from~\cite{babuskaosborn, boffiActa, kato}. 
For more general results, we refer to the original papers.

Before stating the spectral approximation results, we introduce a natural compact operator associated 
with problem\req{eigPbm} and its discrete counterpart and we recall their connection with the eigenmode convergence.

Let $T\in\mathcal{L}(L^2(\Omega))$ be the solution operator associated with problem\req{eigPbm}, namely 
$T:L^2(\Omega) \rightarrow L^2(\Omega)$ is defined by
 $$
 \left\{
\begin{array}{l}
\label{eq:T}
Tf\in V {\text{ such that}}\\
a(Tf,v)=b(f,v)\quad\forall v\in V.
\end{array}
\right.
$$
Operator $T$ is self-adjoint and positive definite. Moreover, operator $T$ is also compact due to the compact embedding of $H^1(\Omega)$ into $L^2(\Omega)$.

Similarly, let $T_h\in\mathcal{L}(L^2(\Omega))$ be the discrete solution operator 
associated with problem\req{discreteSource} defined as
$$
\left\{
\begin{array}{l}
\label{eq:Th}
T_hf\in V {\text{ such that}}\\
a_h(T_hf,v_h)=b_h(f_h,v_h)\quad\forall v_h\in V_h^k.
\end{array}
\right.
$$
Analougosly,  the discrete solution operator $\tilde{T}_h\in\mathcal{L}(L^2(\Omega))$
associated with problem\req{discreteSource2} is defined as
$$
\left\{
\begin{array}{l}
\label{eq:Thtilde}
\tilde{T}_hf\in V {\text{ such that}}\\
a_h(\tilde{T}_hf,v_h)=\tilde{b}_h(f,v_h)\quad\forall v_h\in V_h^k.
\end{array}
\right.
$$

Operators $T_h$ and $\tilde{T}_h$ are self-adjoint and compact since their ranges are finite dimensional. \\
Finally, the eigensolutions of the continuous and the discrete problems\req{eigPbm} and\req{discreteEigPbm} are respectively related to the eigenmodes of operators $T$ and $T_h$ in the sense that the corresponding eigenvalues are inverse of each other and their eigenspaces coincide. By virtue of this correspondence, the convergence analysis can be derived from the spectral approximation theory for compact operators.\\
Since similar considerations hold for the eigenmodes of operators $T$ and $\tilde T_h$, in the following 
we present only the results relative to operators $T$ and $T_h$.

A sufficient condition for the correct spectral approximation of a compact operator $T$ is the uniform convergence to $T$ of the family of discrete operators $\{T_h\}_h$~\cite{babuskaosborn, boffiActa}:
\begin{equation}
\label{eq:unifconv}
\|T-T_h\|_{\mathcal{L}(L^2(\Omega))}\to 0, \quad \text{as } h\to 0,
\end{equation}
or, equivalentely, 
\begin{equation}
\label{eq:unifconv2}
\|Tf-T_hf\|_0\le C\rho(h)\|f\|_0\quad\forall f\in L^2(\Omega),
\end{equation}
with $\rho(h)$ tending to zero as $h$ goes to zero. 

We remark that~\req{unifconv}, besides the convergence of the eigenmodes, contains also the 
information that no spurious eigenvalues pollute the spectrum.
In fact,
\renewcommand{\theenumi}{\roman{enumi}}
\renewcommand{\labelenumi}{(\theenumi)}
\begin{enumerate}
\item each continuous eigenvalue is approximated by a number of discrete eigenvalues 
  (counted with their multiplicity) that corresponds exactly to its multiplicity;
\item each discrete eigenvalue approximates a continuous eigenvalue.
\end{enumerate}

Since operator $T$ is compact and self-adjoint,
condition\req{unifconv} is also necessary for the correct spectral
approximation; see~\cite{bbg3}.
Regarding the rate of convergence of eigenvalues and eigenvectors, we
refer to~\cite{desclouxnassifrappazI,desclouxnassifrappazII}.

A simple way to estimate the norm of the difference $T-T_h$ is to use
{\itshape{a priori}} error estimates.

\section{Convergence analysis of the method}
\label{sc:analysis}

In this section we study the convergence of the discrete eigenmodes provided by 
the VEM approximation to the continuous ones. We will consider the 
non--stabilized discrete formulation~\req{discreteEigPbm} and the 
stabilized one~\req{discreteEigPbm2} separately. 

\subsection{Convergence analysis for the first formulation}
\label{sub:convnonstab}
In the case of the first VEM approximation of problem~\req{eigPbm}, which corresponds to the choice 
of a non--stabilized $b_h(\cdot,\cdot)$ form, 
the uniform convergence of the sequence of operators ${T_h}$ to $T$ directly stems from 
the \emph{a priori} error estimates in Remark~\req{estimate}. 
The optimal rate of convergence of the eigenfunctions 
and the double rate of convergence of the eigenvalues  
can then be proved following the arguments in~\cite{eig-plates}, Sections 4.

The following theorem ensures the convergence of eigenmodes.
\begin{theorem}
\label{eq:thm1}
The family of operators $T_h$ associated with problem~\req{discreteSource} 
converges uniformly to the operator $T$ associated with problem~\req{sourcePbm}, that is,
\begin{equation}
\label{eq:uniformconv}
\|T-T_h\|_{\mathcal{L}(L^2(\Omega))}\to 0\ for\ h\to 0.
\end{equation}
Let $\lambda$ be an eigenvalue of problem~\req{eigPbm}, with multiplicity $m$, and denote 
the corresponding eigenspace by $\mathcal{E}_{\lambda}$. Then  
exactly $m$ discrete eigenvalues 
$\lambda_{1,h},\cdots,\lambda_{m,h}$, which are repeated according to their respective multiplicities, converge to 
$\lambda$. Moreover, let $\mathcal{E}_{\lambda,h}$ be the direct sum of the eigenspaces 
corresponding to the eigenvalues $\lambda_{1,h},\cdots,\lambda_{m,h}$. Then, there exists a positive number $h_0$ such that for $h\le h_0$ 
the following inequalities are true:
\begin{equation}
\begin{split}
& |\lambda-\lambda_{i,h}|\le Ch^{2t}\quad\forall i=1,\cdots,m,\\
& \hat{\delta}(\mathcal{E}_{\lambda},\mathcal{E}_{\lambda,h})\le Ch^t\|f||_0,
\end{split}
\label{eq:eigmodeconv}
\end{equation}
where the non-negative constant C is independent of $h,\  t = \min\{k, r\},$ 
being $k$ the order of the method and $r$ 
the regurality index of the eigenfunction, and 
$\hat{\delta}(\mathcal{E}_{\lambda},\mathcal{E}_{\lambda,h})$ denotes the 
gap between $\mathcal{E}_{\lambda}$ and $\mathcal{E}_{\lambda,h}$.
\end{theorem}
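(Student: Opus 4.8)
The plan is to deduce everything from the uniform convergence estimate, which in turn follows from the \emph{a priori} bound already recorded in Remark~\ref{eq:estimate}. First I would establish \eqref{eq:uniformconv}. For $f \in L^2(\Omega)$, write $Tf = u^s$ (the solution of the source problem \eqref{eq:sourcePbm} with datum $f$) and $T_h f = u_h^s$ (the solution of \eqref{eq:discreteSource} with discrete datum $f_h$). By elliptic regularity $u^s \in H^{1+t}(\Omega)$ with $|u^s|_{1+t} \le C\|f\|_0$, where $t = \min\{k,r\}$. Theorem~\ref{thm:aprioriestimate} combined with Theorems~\ref{thm:interp} and~\ref{thm:approx} (as spelled out in Remark~\ref{eq:estimate}) then gives $|u^s - u_h^s|_1 \le C h^t \|f\|_0$, and a fortiori $\|Tf - T_h f\|_0 \le C h^t \|f\|_0$ by the Poincar\'e inequality. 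This is precisely \eqref{eq:unifconv2} with $\rho(h) = h^t \to 0$, hence \eqref{eq:uniformconv} holds. I would also remark that the duality-pairing term $\mathcal{F}_h = \|f - f_h\|_{V_h'}$ is controlled by $h\|f\|_0$ since $\langle f_h, v_h\rangle$ is built from the piecewise $L^2$-projection $\Pi^0_k f$, so it does not degrade the rate.

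Next I would invoke the abstract spectral approximation theory for compact self-adjoint operators (Babu\v{s}ka--Osborn, as cited in Section~\ref{sc:compact}). Since $T$ is compact, self-adjoint and positive definite, and $T_h \to T$ uniformly in $\mathcal{L}(L^2(\Omega))$, the standard theory yields immediately the first qualitative conclusion: for an eigenvalue $\lambda$ of \eqref{eq:eigPbm} of multiplicity $m$, exactly $m$ discrete eigenvalues $\lambda_{1,h},\dots,\lambda_{m,h}$ (counted with multiplicity) converge to $\lambda$, and there are no spurious modes. Recall that the eigenvalues of $T$ are the reciprocals $\mu = 1/\lambda$ of the eigenvalues of \eqref{eq:eigPbm}, and similarly for $T_h$; the eigenspaces coincide. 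For $h$ small enough the $m$ relevant discrete eigenvalues of $T_h$ are separated from the rest of the spectrum, which is what allows the gap $\hat\delta(\mathcal{E}_\lambda, \mathcal{E}_{\lambda,h})$ to be defined and to be small.

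For the quantitative rates I would use the sharp Babu\v{s}ka--Osborn estimates phrased in terms of the quantities
\[
\varepsilon_h := \|(T - T_h)|_{\mathcal{E}_\lambda}\|_{\mathcal{L}(L^2(\Omega))}, \qquad
\varepsilon_h' := \sup_{\substack{x,y \in \mathcal{E}_\lambda \\ \|x\|_0 = \|y\|_0 = 1}} |b((T - T_h)x, y)|.
\]
The eigenfunction (gap) estimate reads $\hat\delta(\mathcal{E}_\lambda, \mathcal{E}_{\lambda,h}) \le C\,\varepsilon_h$, and since every $x \in \mathcal{E}_\lambda$ is an eigenfunction of the continuous problem, hence as smooth as elliptic regularity permits ($x \in H^{1+r}(\Omega)$), the already proven source estimate gives $\varepsilon_h \le C h^t$. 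This yields the second line of \eqref{eq:eigmodeconv}. For the eigenvalue estimate one uses the refined bound $|\mu - \mu_{i,h}| \le C(\varepsilon_h' + \varepsilon_h^2)$, or more directly the identity expressing the eigenvalue error through $a((T - T_h)x, x)$-type quantities; because $T$ and $T_h$ are self-adjoint with respect to the relevant inner products, one gets $|\mu - \mu_{i,h}| \le C\,\varepsilon_h^2 \le C h^{2t}$, and translating back to $\lambda = 1/\mu$ (using that $\lambda_{i,h}$ stays bounded away from $0$ and $\infty$ for $h \le h_0$) gives $|\lambda - \lambda_{i,h}| \le C h^{2t}$, the first line of \eqref{eq:eigmodeconv}. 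I would cite~\cite{eig-plates}, Section~4, for the details of this last translation, exactly as the text preceding the theorem suggests.

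The main obstacle is really the doubling of the convergence rate for eigenvalues: one must verify that the bilinear-form quantity $\varepsilon_h'$ (equivalently, the "variational crime" terms coming from $a_h \ne a$ and $b_h \ne b$) is itself of order $h^{2t}$ rather than merely $h^t$. This requires a careful bookkeeping of the consistency errors $a_h^P(\cdot,\cdot) - a^P(\cdot,\cdot)$ and $b_h^P(\cdot,\cdot) - b^P(\cdot,\cdot)$ tested against the smooth continuous eigenfunctions, exploiting the $k$-consistency property of $a_h$ and the exactness of $\Pi^0_k$ on $\mathbb{P}_k$ so that, modulo best-approximation terms of order $h^t$ appearing \emph{twice} (once from each argument), the cross terms collapse to $O(h^{2t})$. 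All other steps are routine applications of the cited abstract theory.
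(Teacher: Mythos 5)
Your proposal is correct and follows essentially the same route as the paper: the uniform convergence \eqref{eq:uniformconv} is deduced from the \emph{a priori} source estimate of Remark~\ref{eq:estimate} exactly as in the paper's proof, and the rates \eqref{eq:eigmodeconv} are obtained from the Babu\v{s}ka--Osborn theory by adapting the arguments of Theorems 4.2 and 4.3 of the cited plate-eigenvalue reference, which is precisely what the paper does. You in fact supply somewhat more detail than the paper (the $\varepsilon_h$, $\varepsilon_h'$ bookkeeping and the consistency-error doubling for the eigenvalue rate), which the paper delegates entirely to that citation.
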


\begin{proof}
The uniform convergence of $T_h$ to $T$  directly stems from the 
{\itshape a priori} error estimate in Remark~\req{estimate}. Indeed, denoting by $u^s$ and $u_h^s$, respectively, the solutions of the continuous and discrete 
source problems~\req{sourcePbm} 
and~\req{discreteSource} 
corresponding to $f\in L^2(\Omega)$, it holds 
$$
\|T-T_h\|_{\mathcal{L}(L^2(\Omega))}=
\sup_{f\in L^2(\Omega)}\dfrac{\|Tf-T_hf\|_0}{\|f\|_0}
=\sup_{f\in L^2(\Omega)}\dfrac{\|u^s-u_h^s\|_0}{\|f\|_0}\le Ch^l,
$$
with $l=\min\{t,1\}$, being $t=\min\{k,r\}$. 
The eigenmodes convergence~\req{eigmodeconv} can then be proved following step by step 
the lines of the proof of Theorems 4.2. and 4.3. in~\cite{eig-plates}, substituting 
the projector $\Pi^{\Delta_K}$ with $\Pi^0_k$.
\end{proof}

\subsection{Convergence analysis for the second formulation}
\label{sub:convstab}
The convergence analysis of the second discrete formulation of problem~\req{eigPbm}, 
corresponding to the choice of the stabilized form $\tilde{b}_h(\cdot,\cdot)$, is more involved. 
In this case, 
we resort to the abstract theory of the spectral approximation for non-compact operators by Descloux, Nassif, and Rappaz 
(see~\cite{desclouxnassifrappazI, desclouxnassifrappazII}). 

We recall the main convergence theorem stated in~\cite{desclouxnassifrappazII}. 

\begin{theorem}
\label{thm:nassif}
Assume that the following two conditions are satisfied:\\
\begin{equation*}
\textrm{\bf{(1):}}\   \|(T-\tilde{T}_h)_{| V^k_h}\|_{\mathcal{L}(L^2(\Omega))}\to 0\quad {\textrm{\bf (2):}}\ \lim_{h\to 0}\inf_{v_h\in V^k_h}\|v-v_h\|_V=0\ \forall v\in V. 
\end{equation*}
Then the eigenmodes convergence holds.
\end{theorem}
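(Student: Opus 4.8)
The proof follows the abstract spectral approximation framework for non-compact operators of Descloux, Nassif and Rappaz~\cite{desclouxnassifrappazI, desclouxnassifrappazII} (see also the exposition in~\cite{boffiActa}). The plan is to apply the Riesz--Dunford functional calculus to the operator $\hat T_h := \tilde T_h|_{V^k_h}\colon V^k_h\to V^k_h$ obtained by restricting $\tilde T_h$ to its range $V^k_h$; since every nonzero eigenvalue of $\tilde T_h$ on $L^2(\Omega)$ is an eigenvalue of $\hat T_h$ with the same eigenspace (the eigenfunction lying automatically in $V^k_h$), it suffices to study $\hat T_h$. Fix a nonzero eigenvalue $\mu=1/\lambda$ of $T$ of multiplicity $m$, with eigenspace $\mathcal E_\mu$, and let $\Gamma\subset\mathbb C$ be a small counterclockwise circle centred at $\mu$ with $\Gamma\cap\sigma(T)=\emptyset$ and enclosing no other point of $\sigma(T)$.

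\medskip\noindent\textbf{Step 1 (uniform resolvent bound).} First I would show that there are $h_0>0$ and $C>0$ such that for $h\le h_0$ and $z\in\Gamma$ the operator $z-\hat T_h$ is invertible on $V^k_h$ with $\|(z-\hat T_h)^{-1}\|_{\mathcal L(V^k_h)}\le C$. For $v_h\in V^k_h$ write $(z-\hat T_h)v_h=(z-T)v_h+(T-\tilde T_h)v_h$; since $\Gamma$ is a compact set disjoint from $\sigma(T)$, one has $\|(z-T)v_h\|_0\ge c_\Gamma^{-1}\|v_h\|_0$ uniformly on $\Gamma$, with $c_\Gamma:=\sup_{z\in\Gamma}\|(z-T)^{-1}\|_{\mathcal L(L^2(\Omega))}<\infty$, while hypothesis \textbf{(1)} gives $\|(T-\tilde T_h)v_h\|_0\le\varepsilon(h)\|v_h\|_0$ with $\varepsilon(h)\to0$. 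Hence $\|(z-\hat T_h)v_h\|_0\ge(c_\Gamma^{-1}-\varepsilon(h))\|v_h\|_0\ge\tfrac12 c_\Gamma^{-1}\|v_h\|_0$ for $h$ small, and injectivity on the finite-dimensional space $V^k_h$ yields invertibility and the asserted bound. Repeating the argument on a small circle around any $z_0\notin\sigma(T)$ with $z_0\neq0$ shows that no discrete eigenvalue can accumulate at $z_0$: there are no spurious eigenvalues away from the origin.

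\medskip\noindent\textbf{Step 2 (spectral projectors and dimension count).} Introduce the spectral projectors $E=\frac1{2\pi i}\oint_\Gamma(z-T)^{-1}\,dz$, of rank $m$ with range $\mathcal E_\mu$, and $E_h=\frac1{2\pi i}\oint_\Gamma(z-\hat T_h)^{-1}\,dz$, which by Step 1 is well defined for $h\le h_0$ and projects onto the span of the discrete eigenspaces associated with the (nonzero) discrete eigenvalues inside $\Gamma$. Given $u\in\mathcal E_\mu\subset V$, hypothesis \textbf{(2)} furnishes $u_h\in V^k_h$ with $\|u-u_h\|_0\le C\|u-u_h\|_V\to0$. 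Using the resolvent identity $(z-\hat T_h)^{-1}-(z-\mu)^{-1}=(z-\mu)^{-1}(z-\hat T_h)^{-1}(\hat T_h-\mu I)$ together with the elementary decomposition $(\hat T_h-\mu I)u_h=(\tilde T_h-T)u_h+(T-\mu I)(u_h-u)$ (which uses $Tu=\mu u$), Step 1 gives $\|E_h u_h-u_h\|_0\le C\,|\Gamma|\,(\varepsilon(h)\|u_h\|_0+\|u-u_h\|_0)\to0$, whence $E_h u_h\to u$. Thus $u\mapsto E_h u_h$ maps $\mathcal E_\mu$ into $\operatorname{Range}(E_h)$ and tends to the identity uniformly on the unit sphere of the finite-dimensional space $\mathcal E_\mu$, so it is injective for $h$ small and $\dim\operatorname{Range}(E_h)\ge m$. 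For the reverse inequality I would prove that $\|(I-E)w_h\|_0\to0$ uniformly over $w_h\in\operatorname{Range}(E_h)$ with $\|w_h\|_0=1$, by representing $(I-E)w_h$ as a contour integral of $(z-T)^{-1}w_h$ along a curve enclosing $\sigma(T)\setminus\{\mu\}$ and invoking once more the uniform resolvent estimate of Step 1 on that curve; this forces $\operatorname{Range}(E_h)$ to be asymptotically contained in the $m$-dimensional space $\mathcal E_\mu$, hence $\dim\operatorname{Range}(E_h)\le m$. Combining, $\dim\operatorname{Range}(E_h)=m$: exactly $m$ discrete eigenvalues, counted with multiplicity, lie inside $\Gamma$ and converge to $\mu$, and $\hat\delta(\mathcal E_\mu,\operatorname{Range}(E_h))\to0$. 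Translating back via the correspondence between the spectra of $T$, $\tilde T_h$ and the eigenproblems~\req{eigPbm},~\req{discreteEigPbm2} gives the claimed convergence of the eigenmodes.

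\medskip\noindent\textbf{Main obstacle.} The functional-calculus bookkeeping is routine; the genuinely delicate point is the reverse dimension estimate — showing that $\operatorname{Range}(E_h)$ contains no direction far from $\mathcal E_\mu$, i.e. that the scheme neither loses nor creates modes near $\mu$. The obstruction is that hypothesis \textbf{(1)} only controls $T-\tilde T_h$ on $V^k_h$, so the classical perturbation argument valid on all of $L^2(\Omega)$ (the one used for the first formulation) is unavailable, and the approximability hypothesis \textbf{(2)} must be inserted into the contour-integral estimates at precisely the right places. Once Theorem~\ref{thm:nassif} is available, convergence for the stabilized formulation follows by checking its two hypotheses for $\tilde T_h$: hypothesis \textbf{(2)} from Theorem~\ref{thm:interp} and density of smooth functions in $V$, and hypothesis \textbf{(1)} from the \emph{a priori} estimate of Remark~\ref{eq:estimate} adapted to the stabilized right-hand side; the quantitative rates for eigenvalues and eigenfunctions then come from those same estimates.
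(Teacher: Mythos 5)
Your proposal addresses a statement that the paper does not actually prove: Theorem~\ref{thm:nassif} is quoted verbatim as the abstract convergence result of Descloux, Nassif and Rappaz, and the authors simply cite \cite{desclouxnassifrappazII} rather than reproduce its proof. What you have written is therefore not comparable to a proof in the paper but is, in substance, a reconstruction of the Descloux--Nassif--Rappaz argument itself, and it is essentially the right one: restrict $\tilde T_h$ to its range $V^k_h$, get a uniform resolvent bound on a contour $\Gamma$ around the target eigenvalue from hypothesis \textbf{(1)} (which suffices because the resolvent of $\hat T_h$ is only ever applied to elements of $V^k_h$), define the Riesz projectors $E$ and $E_h$, use hypothesis \textbf{(2)} to show $E_h$ does not lose modes ($\dim\mathrm{Range}(E_h)\ge m$), and use \textbf{(1)} again for non-pollution ($\dim\mathrm{Range}(E_h)\le m$). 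Your Step~1 and the identity $(\hat T_h-\mu I)u_h=(\tilde T_h-T)u_h+(T-\mu I)(u_h-u)$ in Step~2 are correct and are exactly where the two hypotheses enter. The one place where your sketch is imprecise is the reverse dimension estimate: you propose to represent $(I-E)w_h$ by a contour integral along a curve enclosing $\sigma(T)\setminus\{\mu\}$, but for a compact operator that set accumulates at $0$ and cannot be enclosed by a contour in the resolvent set; the standard route is instead to write $Ew_h-E_hw_h=\frac{1}{2\pi i}\oint_\Gamma (z-T)^{-1}(T-\tilde T_h)(z-\hat T_h)^{-1}w_h\,\mathrm{d}z$ on the \emph{same} contour $\Gamma$, observe that $(z-\hat T_h)^{-1}w_h\in V^k_h$ so hypothesis \textbf{(1)} applies, and conclude from $E_hw_h=w_h$ that $\|(I-E)w_h\|_0\le C\varepsilon(h)<1$, whence $E$ is injective on $\mathrm{Range}(E_h)$ and the dimension count closes. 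With that repair your argument is the standard proof; since the paper delegates it entirely to the cited references, your write-up supplies detail the paper deliberately omits rather than diverging from it.
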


\begin{theorem}
\label{thm:p1p2}
The following two conditions hold true. 
\begin{equation}
\label{eq:uniform}
\textrm{\bf{(1):}}\   \|(T-\tilde{T}_h)_{| V^k_h}\|_{\mathcal{L}(L^2(\Omega))}\to 0\quad {\textrm{\bf (2):}}\ \lim_{h\to 0}\inf_{v_h\in V^k_h}\|v-v_h\|_V=0\ \forall v\in V. 
\end{equation}
\end{theorem}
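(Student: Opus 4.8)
The plan is to establish the two conditions separately, treating condition \textbf{(2)} as essentially known and concentrating the real work on condition \textbf{(1)}.

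For condition \textbf{(2)}, I would argue that $V_h^k$ is a conforming subspace of $V = H_0^1(\Omega)$, so for any $v \in V$ density of smooth functions together with the interpolation estimate of Theorem~\ref{thm:interp} (applied on a sufficiently regular approximation, or a standard Scott--Zhang/Clément-type regularization when $v$ has only $H^1$ regularity) gives $\inf_{v_h \in V_h^k}\|v - v_h\|_V \to 0$ as $h \to 0$. This is the same density property already used implicitly in the source-problem analysis, so only a brief remark is needed.

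The heart of the matter is condition \textbf{(1)}, the uniform convergence of $T - \tilde T_h$ restricted to $V_h^k$ in the $\mathcal L(L^2(\Omega))$ norm. Here is where the stabilization term $\tilde S^P$ in $\tilde b_h$ obstructs a direct appeal to Remark~\ref{eq:estimate}: unlike the first formulation, the right-hand side form is not simply a polynomial projection, so $\tilde T_h$ is not the operator $T_h$ analyzed before. The strategy is: given $f \in V_h^k$ (note we only need $f$ ranging over the discrete space, which is the whole point of the Descloux--Nassif--Rappaz restriction), compare $\tilde T_h f$ with $T_h f$, i.e.\ estimate $\|\tilde T_h f - T_h f\|_0$, and then invoke the already-proven uniform convergence $\|T - T_h\|_{\mathcal L(L^2)} \to 0$ from Theorem~\ref{eq:thm1} via the triangle inequality. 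Since $a_h(\tilde T_h f - T_h f, v_h) = \tilde b_h(f, v_h) - b_h(f, v_h) = \sum_P \tilde S^P\big((I-\Pi_k^0)f, (I-\Pi_k^0)v_h\big)$, coercivity of $a_h$ (the \emph{stability} property) gives $|\tilde T_h f - T_h f|_1 \lesssim \sup_{v_h}\big(\sum_P \tilde S^P((I-\Pi_k^0)f,\cdot)\big)/|v_h|_1$. Using the scaling $\tilde S^P \simeq h_P^n$ of Remark~\ref{eq:scale2} and the inverse inequality on $V_h^k$ to absorb the $|v_h|_1$, one bounds this by $C h \|(I-\Pi_k^0)f\|_0 \le C h \|f\|_0$; then a duality (Aubin--Nitsche) argument or simply the $L^2$-$H^1$ embedding promotes $|\cdot|_1$ to $\|\cdot\|_0$ with an extra power of $h$, yielding $\|\tilde T_h f - T_h f\|_0 \le C h \|f\|_0$. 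Combined with Theorem~\ref{eq:thm1}, this gives $\|(T - \tilde T_h)_{|V_h^k}\|_{\mathcal L(L^2)} \le C(\rho(h) + h) \to 0$.

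The main obstacle I anticipate is making the estimate of the stabilization defect $\tilde S^P((I-\Pi_k^0)f, (I-\Pi_k^0)v_h)$ genuinely uniform: one must combine the scaling hypothesis on $\tilde S^P$ with the right inverse estimate so that the $h$-powers balance correctly (roughly, $\tilde S^P \simeq h_P^n$ against an $h_P^{-1}$ inverse-inequality factor and the norm equivalence of $\Pi_k^0$ on polynomials), and one must be careful that the bound holds for \emph{all} admissible choices of $\tilde S^P$, not a specific one. A secondary subtlety is that $f$ lives in $V_h^k$, so $(I - \Pi_k^0)f$ is not small pointwise but its $L^2$-norm is controlled by $\|f\|_0$ with a constant independent of $h$; this is exactly what keeps the final bound linear in $h$ rather than $O(1)$. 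Once these scaling bookkeeping points are handled, the rest follows by assembling the triangle inequality with the previously established Theorem~\ref{eq:thm1}.
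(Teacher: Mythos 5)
Your overall strategy coincides with the paper's: condition \textbf{(2)} from the approximation properties of $V_h^k$, and condition \textbf{(1)} by the triangle inequality through $T_h$, writing $a_h(\tilde T_h f - T_h f, v_h)=\sum_P \tilde S^P\bigl((I-\Pi_k^0)f,(I-\Pi_k^0)v_h\bigr)$, deriving an $O(h)$ energy estimate, and combining with Theorem~\ref{eq:thm1}. This is exactly the decomposition and the key identity used in the paper's proof.

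There is, however, one step where your stated mechanism would fail. The crucial factor of $h$ cannot come from ``the inverse inequality on $V_h^k$ to absorb the $|v_h|_1$'': the inverse inequality bounds $|v_h|_{1,P}$ by $h_P^{-1}\|v_h\|_{0,P}$, i.e.\ it runs in the wrong direction, and the global Poincar\'e inequality $\|v_h\|_0\le C|v_h|_1$ carries no power of $h$, so that route only yields $|\tilde T_h f - T_h f|_1\le C\|f\|_0$, which does not tend to zero. The correct source of the factor $h$ (and what the paper uses) is the local $L^2$-projection error applied to the \emph{test} function: after invoking the stability $\tilde S^P(w,z)\le \tilde c_1\|w\|_{0,P}\|z\|_{0,P}$, one bounds $\|(I-\Pi_k^0)f\|_{0,P}\le\|f\|_{0,P}$ and $\|(I-\Pi_k^0)v_h\|_{0,P}\le C h_P|v_h|_{1,P}$, the latter being a Bramble--Hilbert/Poincar\'e estimate for functions orthogonal to constants, not an inverse estimate. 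With this repair the energy bound $|\tilde T_h f - T_h f|_1\le Ch\|f\|_0$ follows, and then the plain Poincar\'e inequality already gives $\|\tilde T_h f - T_h f\|_0\le Ch\|f\|_0$; no duality argument is needed, and contrary to your phrasing no ``extra power of $h$'' is obtained or required at that stage.
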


\begin{proof}
Property {\bf (2)} directly stems from the approximation properties of the virtual element space~(Theorem~\ref{thm:approx}). 
On the other hand, property ${\bf{(1)}}$ can be proved as follows. 
$$
\|(T-\tilde{T}_h)|_{V^k_h}\|_{\mathcal{L}(L^2(\Omega))}\le\|(T-T_h)|_{V^k_h}\|_{\mathcal{L}(L^2(\Omega))}
+\|(T_h-\tilde{T}_h)|_{V^k_h}\|_{\mathcal{L}(L^2(\Omega))}.
$$
By Theorem~\req{thm1}, the first terms goes to zero. We are left to prove that the second term 
goes to zero as well. To this end, we proceed as follows:
\begin{equation}
\|(T_h-\tilde{T}_h)|_{V^k_h}\|_{\mathcal{L}(L^2(\Omega))}=\sup_{g_h\in V_h^k}\dfrac{\|T_hg_h-\tilde{T}_hg_h\|_0}{\|g_h\|_0}
=\sup_{g_h\in V_h^k}\dfrac{\|u_h^s-\tilde{u}_h^s\|_0}{\|g_h\|_0},
\end{equation}
where $u_h^s$ and $\tilde{u}_h^s$ denote, respectively, the solutions of the discrete 
source problems~\req{discreteSource} 
and~\req{discreteSource2} 
corresponding to $g_h$.\\
Let $\delta_h=\tilde{u}_h^s-u^s_h$. It holds\\
$$
\begin{array}{lll}
|\delta_h|_1^2  & \le\dfrac{1}{\alpha_*}\displaystyle{\sum_{P\in\mathcal{T}_h}}a_h^P(\delta_h,\delta_h) & 
(\text{stability of }  a_h^P(\cdot,\cdot))\\
& = \dfrac{1}{\alpha_*}\displaystyle{\sum_{P\in\mathcal{T}_h}}\tilde{S}^P\Big((I-\Pi^0_k)g_h,(I-\Pi^0_k)\delta_h\Big) 
& \text{(discrete source 
problems~\req{discreteSource} and~\req{discreteSource2})}\\
& \le \dfrac{\tilde{c}_1}{\alpha_*} \displaystyle{\sum_{P\in\mathcal{T}_h}} \|(I-\Pi^0_k)g_h\|_{0,P}  \|(I-\Pi^0_k)\delta_h\|_{0,P} 
& \text{(stability of $\tilde{S}^P(\cdot,\cdot)$)}\\
& \le C \, h \|g_h\|_0 \, |\delta_h|_1 & \text{(stability of $\Pi^0_k$ and projection error)}
\end{array}\medskip
$$
and hence
\begin{equation}
\label{eq:stimah1}
|\delta_h|_1 \le C \, h \, \|g_h\|_0.
\end{equation}
Taking into account the Poincar\'e inequality and estimate~\req{stimah1}, we obtain
$$
\|\delta_h\|_0\le C_p |\delta_h|_1\le C\,h\,\|g_h\|_0,
$$
with $C_p$ denoting the Poincar\'e constant. 
We conclude the proof observing that 
$$
\dfrac{\|\delta_h\|_0}{\|g_h\|_0}\le C\,h\quad forall\ g_h\in V_h^k,
$$
which gives the uniform convergence {\bf{(1)}} in~\req{uniform}.
\end{proof}

We end this section stating the convergence theorem for the second discrete approximation of problem~\req{eigPbm}.

\begin{theorem}
\label{thm:stab}
The family of operators $\tilde{T}_h$ associated with problem~\req{discreteSource2} 
converges uniformly to the operator $T$ associated with problem~\req{sourcePbm}, that is,
$$
\|T-\tilde{T}_h\|_{\mathcal{L}(L^2(\Omega))}\to 0\ for\  h\to 0.
$$
Let $\lambda$ be an eigenvalue of problem~\req{eigPbm}, with multiplicity $m$, and denote 
the corresponding eigenspace by $\mathcal{E}_{\lambda}$. Then
exactly $m$ discrete eigenvalues 
$\tilde{\lambda}_{1,h},\cdots,\tilde{\lambda}_{m,h}$, which are repeated according to their respective multiplicities, converge to 
$\tilde{\lambda}$. Moreover, let $\mathcal{E}_{\tilde{\lambda},h}$ be the direct sum of the eigenspaces corresponding 
to the eigenvalues $\tilde{\lambda}_{1,h},\cdots,\tilde{\lambda}_{m,h}$. 
Then, there exists a positive number $h_0$ such that for $h\le h_0$ 
the following inequalities are true:
$$
\begin{array}{l}
|\tilde{\lambda}-\tilde{\lambda}_{i,h}|\le Ch^{2t}\quad\forall i=1,\cdots,m,\\
\hat{\delta}(\mathcal{E}_{\tilde{\lambda}},\mathcal{E}_{\tilde{\lambda},h})\le C h^t,
\end{array}
$$
where the non-negative constant C is independent of $h,\  t = \min\{k, r\},$ being $k$ the order of the method and $r$ 
the regurality index of the eigenfunction, and 
$\hat{\delta}(\mathcal{E}_{\tilde{\lambda}},\mathcal{E}_{\tilde{\lambda},h})$ denotes the 
gap between $\mathcal{E}_{\tilde{\lambda}}$ and $\mathcal{E}_{\tilde{\lambda},h}$.
\end{theorem}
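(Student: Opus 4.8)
The plan is to deduce this theorem from the abstract Descloux--Nassif--Rappaz framework recalled in Theorem~\ref{thm:nassif}, exactly as the first formulation (Theorem~\ref{eq:thm1}) was handled through its own uniform convergence. First I would observe that the two hypotheses \textbf{(1)} and \textbf{(2)} of Theorem~\ref{thm:nassif} have just been verified in Theorem~\ref{thm:p1p2}, so the qualitative eigenmode convergence---namely that exactly $m$ discrete eigenvalues $\tilde\lambda_{1,h},\dots,\tilde\lambda_{m,h}$ (counted with multiplicity) converge to $\tilde\lambda=\lambda$, with the associated discrete eigenspaces approaching $\mathcal{E}_{\tilde\lambda}=\mathcal{E}_\lambda$---follows immediately. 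The remaining content is the rate of convergence, and for this I would pass through the uniform operator estimate $\|T-\tilde T_h\|_{\mathcal{L}(L^2(\Omega))}\to 0$ claimed in the first sentence of the statement, since once we control this norm (and also its restriction to $\mathcal{E}_\lambda$) the quantitative estimates of Babu\v{s}ka--Osborn / Descloux--Nassif--Rappaz apply verbatim.

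The key step is therefore the quantitative bound on $\|(T-\tilde T_h)|_{V_h^k}\|$ and, more to the point, on $\|(T-\tilde T_h)f\|_0$ for $f$ in the \emph{continuous} eigenspace $\mathcal{E}_\lambda$, where the extra regularity $f\in H^{1+r}(\Omega)$ is available. I would split
\[
\|(T-\tilde T_h)f\|_0 \le \|(T-T_h)f\|_0 + \|(T_h-\tilde T_h)f\|_0 .
\]
For the first term Remark~\ref{eq:estimate} (combined with the elliptic regularity shift and the $L^2$ duality argument used in the proof of Theorem~\ref{eq:thm1}) gives $\|(T-T_h)f\|_0\le C h^{2t}\|f\|_0$ when $f$ is smooth enough---this is the source of the double order $h^{2t}$ for the eigenvalues. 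For the second term I would recycle the computation already carried out in the proof of Theorem~\ref{thm:p1p2}: setting $\delta_h=\tilde T_h f - T_h f$, the $a_h$-coercivity, the consistency of the difference $\tilde b_h - b_h = \tilde S^P((I-\Pi^0_k)\cdot,(I-\Pi^0_k)\cdot)$ on polynomials, the stability bounds on $\tilde S^P$ and on $\Pi^0_k$, and a projection error estimate yield $|\delta_h|_1\le C h\|f\|_0$, hence $\|\delta_h\|_0\le C_p h\|f\|_0$ by Poincar\'e. (If a sharper $h^{2t}$ rate for the eigenvalues is to be squeezed out of this term rather than just $h$, one uses that $\Pi^0_k f$ approximates $f$ to order $h^{t+1}$ and an Aubin--Nitsche duality on the discrete problem; but for the $h^t$ gap estimate and the $h^{2t}$ eigenvalue estimate as stated, the $O(h)$ bound on the stabilization term is harmless since it is dominated by $h^{2t}$ only when $t\le 1/2$, which is excluded because $r>1/2$, $k\ge 1$, so actually one does need the duality refinement here).

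Once these pieces are assembled, I would invoke the standard spectral estimates for self-adjoint compact operators: the eigenvalue error is bounded by $\|(T-\tilde T_h)|_{\mathcal{E}_\lambda}\|$ up to the square of the eigenfunction gap, giving $|\tilde\lambda-\tilde\lambda_{i,h}|\le C h^{2t}$, while the gap $\hat\delta(\mathcal{E}_{\tilde\lambda},\mathcal{E}_{\tilde\lambda,h})$ is bounded by $\|(T-\tilde T_h)|_{\mathcal{E}_\lambda}\|\le C h^t$. Concretely the cleanest route is to state that the proof follows step by step the lines of Theorems~4.2 and 4.3 of~\cite{eig-plates}, now using the operator $\tilde T_h$ and the decomposition above, with the stabilization contribution $\|(T_h-\tilde T_h)|_{V_h^k}\|$ playing the role of an additional, higher-order (in fact $O(h)$, and after duality $O(h^{2t})$) perturbation. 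The main obstacle I anticipate is precisely this last point: showing that the stabilization term $\tilde S^P$ does not spoil the optimal \emph{double} order for the eigenvalues---this requires an Aubin--Nitsche-type duality argument on the discrete source problem rather than the crude energy estimate that suffices for property \textbf{(1)}, and care must be taken that the duality solution is itself smooth enough ($H^{1+r}$) for the regularity shift to deliver the extra power of $h$.
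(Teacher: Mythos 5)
Your proposal follows essentially the same route as the paper: the paper's proof is just two sentences, deducing the uniform convergence from Theorems~\ref{thm:nassif} and~\ref{thm:p1p2} and then stating that the eigenmode rates follow the arguments of Theorems~4.2 and~4.3 of~\cite{eig-plates}, exactly as you propose. Your additional discussion of the splitting $T-\tilde T_h=(T-T_h)+(T_h-\tilde T_h)$ and of the duality refinement needed so that the stabilization perturbation does not spoil the double order $h^{2t}$ is a correct and useful elaboration of a point the paper leaves entirely implicit in its citation of~\cite{eig-plates}.
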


\begin{proof}
The uniform convergence of $\tilde{T}_h$ to $T$ is a direct consequence of Theorems~\ref{thm:nassif} and~\ref{thm:p1p2}.  
Finally, as for the analysis of the non stabilized formulation, eigenmodes convergence can be proved following the same arguments 
as in Theorem 4.2. and 4.3. in~\cite{eig-plates}.
\end{proof}

\section{Numerical tests}
\label{se:experiments}

In this section we present four numerical experiments to test the  performance of the virtual element methodfor the bi-dimensional case,
in particular we confirm the a priori bounds on the error of the eigenvalue approximation provided
by Theorem \ref{eq:thm1} and Theorem \ref{thm:stab}.
We  consider the error quantities:
\begin{equation}\label{eq:errqnts}
\epsilon_{h, \lambda} := |\lambda - \lambda_h|.
\end{equation}

We briefly sketch two possible constructions of the stabilizing bilinear forms  $S^P(\cdot, \cdot)$ and $\tilde{S}^P(\cdot, \cdot)$ in \eqref{eq:discreteforms} and \eqref{eq:discretebstab}, respectively. The first  choice  follows a  standard VEM technique (cf. \cite{vem, VemGuide}), the second one  is a  new \textit{diagonal recipe} for the stabilization introduced lately in \cite{dassi}.
Let us denote with $\bar{\mathbf{v}}_h$, $\bar{\mathbf{w}}_h \in {\RE}^{N_P}$
the vectors containing the values of the $N_P$  local degrees of freedom associated to $v_h, w_h \in 
V_h^k(P)$. Then, we set
\begin{itemize}
\item \textbf{scalar stabilization:} 
\begin{equation}
\label{eq:original}
S^P (v_h, w_h) =  \sigma_P \, \bar{\mathbf{v}}_h^T \bar{\mathbf{w}}_h
\qquad \text{and} \qquad 
\tilde{S}^P (v_h, w_h) := \tau_P \, h_P^2 \, \bar{\mathbf{v}}_h^T \bar{\mathbf{w}}_h
\end{equation}
where the stability parameters $\sigma_P$ and $\tau_P$ are two positive $h$-independent constants. When non clearly mentioned, in the numerical tests we choose $\sigma_P$ as the mean value of the eigenvalues of the matrix stemming from the consistency  term 
$a^P \left({\Pi}_{k}^{\nabla} \cdot, \, {\Pi}_{k}^{\nabla} \cdot \right)$ for the grad-grad form (see \eqref{eq:discreteforms}). In the same way we pick $\tau_P$ as the mean value of the eigenvalues of the matrix resulting from the term $\frac{1}{h_P^2}(\Pi^0_{k}\cdot, \, \Pi^0_{k} \cdot)_P$ for the mass matrix (see \eqref{eq:discretebstab}). 
\item \textbf{diagonal stabilization:} 
\begin{equation}
\label{eq:recipe}
S^P (v_h, w_h) =  \bar{\mathbf{v}}_h^T \, D_P \, \bar{\mathbf{w}}_h
\qquad \text{and} \qquad 
\tilde{S}^P (v_h, w_h) :=  \bar{\mathbf{v}}_h^T \, \widetilde{D}_P \, \bar{\mathbf{w}}_h.
\end{equation}
where $D_P$ and $\widetilde{D}_P$ are two diagonal matrices defined by \cite{dassi}
\[
(D_P)_{i,i} = \max \left\{ 1, \, a^P \left({\Pi}_{k}^{\nabla} \phi_i, \, {\Pi}_{k}^{\nabla} \phi_i \right)_P \right\} \qquad i=1, \dots, N_P
\]
\[
(\widetilde{D}_P)_{i,i} = \max \left\{ h^2_P, \, \left({\Pi}_{k}^{0} \phi_i, \, {\Pi}_{k}^{0} \phi_i \right)_P \right\} \qquad i=1, \dots, N_P
\]
where $\phi_i$ denotes the $i$-th basis function.
\end{itemize}
 Using standard scaling arguments, in accordance with Remark~\req{scale1} 
and Remark~\req{scale2}, we notice that both stabilizations yield the correct scale for $S^P (\cdot, \cdot)$  and $\tilde{S}^P (\cdot, \cdot)$.

\begin{test}
\label{test1}
In the first test we consider the standard eigenvalue problem with homogeneous Dirichlet boundary conditions.

Regarding the computational domain, in the test we take the square domain $\Omega= (0,1) ^2$, which is partitioned using the following sequences of polygonal meshes:
\begin{itemize}
\item $\{ \mathcal{V}_h\}_h$: sequence of Voronoi meshes with $h=1/8, 1/16, 1/32, 1/64$,
\item $\{ \mathcal{T}_h\}_h$: sequence of triangular meshes with $h=1/4, 1/8, 1/16, 1/32$,
\item $\{ \mathcal{Q}_h\}_h$: sequence of square meshes with $h=1/8, 1/16, 1/32, 1/64$,
\item $\{ \mathcal{W}_h\}_h$: sequence of WEB-like meshes with $h=1/8, 1/16, 1/32, 1/64$.
\end{itemize}
An example of the adopted meshes is shown in Figure \ref{Figure1}.  

\begin{figure}[!h]
\centering
\includegraphics[scale=0.66]{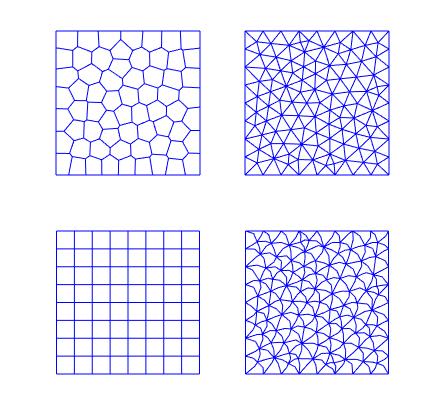} 
\caption{Test 7.1. Example of polygonal meshes: $\mathcal{V}_{1/8}$,  $\mathcal{T}_{1/8}$, $\mathcal{Q}_{1/8}$, $\mathcal{W}_{1/8}$.}
\label{Figure1}
\end{figure}

For the generation of the Voronoi meshes we used the code Polymesher \cite{TPPM12}. 
The WEB-like meshes are composed by hexagons, generated starting from the triangular meshes $\{ \mathcal{T}_h\}_h$ and randomly displacing the midpoint of each (non boundary) edge.\\
It is well known that the eigenvalues of the problem are given by
\[
\lambda = \pi^2 (n^2 + m^2) \qquad for\  n, m\in\mathbb{N},\  with\ n,m \neq 0.
\]
We consider the VEM approximation problem \eqref{eq:discreteEigPbm2} stemming from the stabilized bilinear form $\tilde{b}_h(\cdot, \cdot)$  where we use the stabilization \eqref{eq:original} with the above mentioned selections of the stabilization parameters. We consider the polynomial degree of accuracy  $k=1,2,3,4$ and we study the convergence of the errors $\epsilon_{h, \lambda}$ with respect to $h$ for the first six eigenvalues.

In Figures \ref{Figure2}-\ref{Figure5},  we display the results for the sequence of Voronoi    
meshes $\mathcal{V}_h$, the sequence of meshes $\mathcal{T}_h$, $\mathcal{Q}_h$, and 
$\mathcal{W}_h$, respecitvely.

\begin{figure}[!h]
\centering
\includegraphics[scale=0.2]{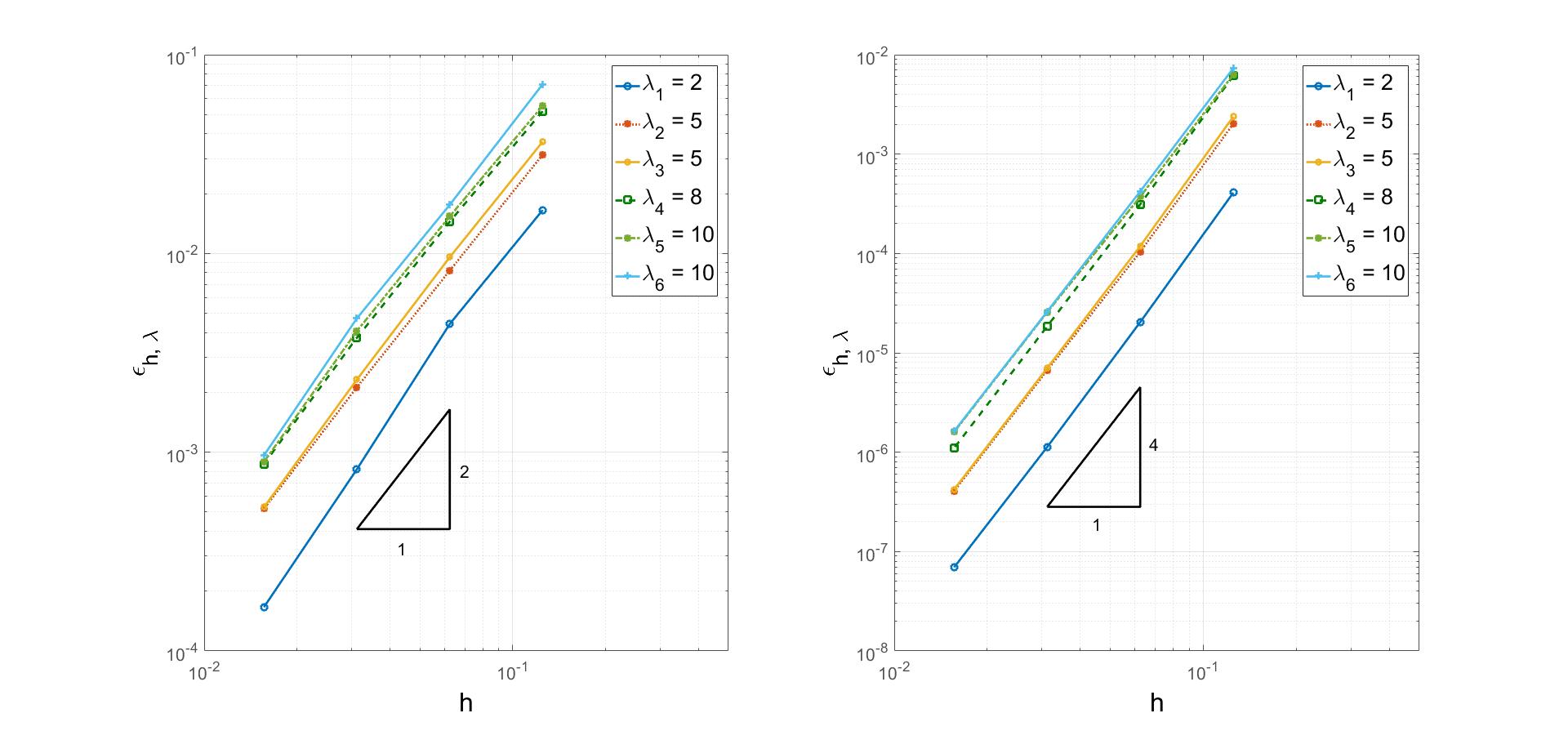} \\
\includegraphics[scale=0.2]{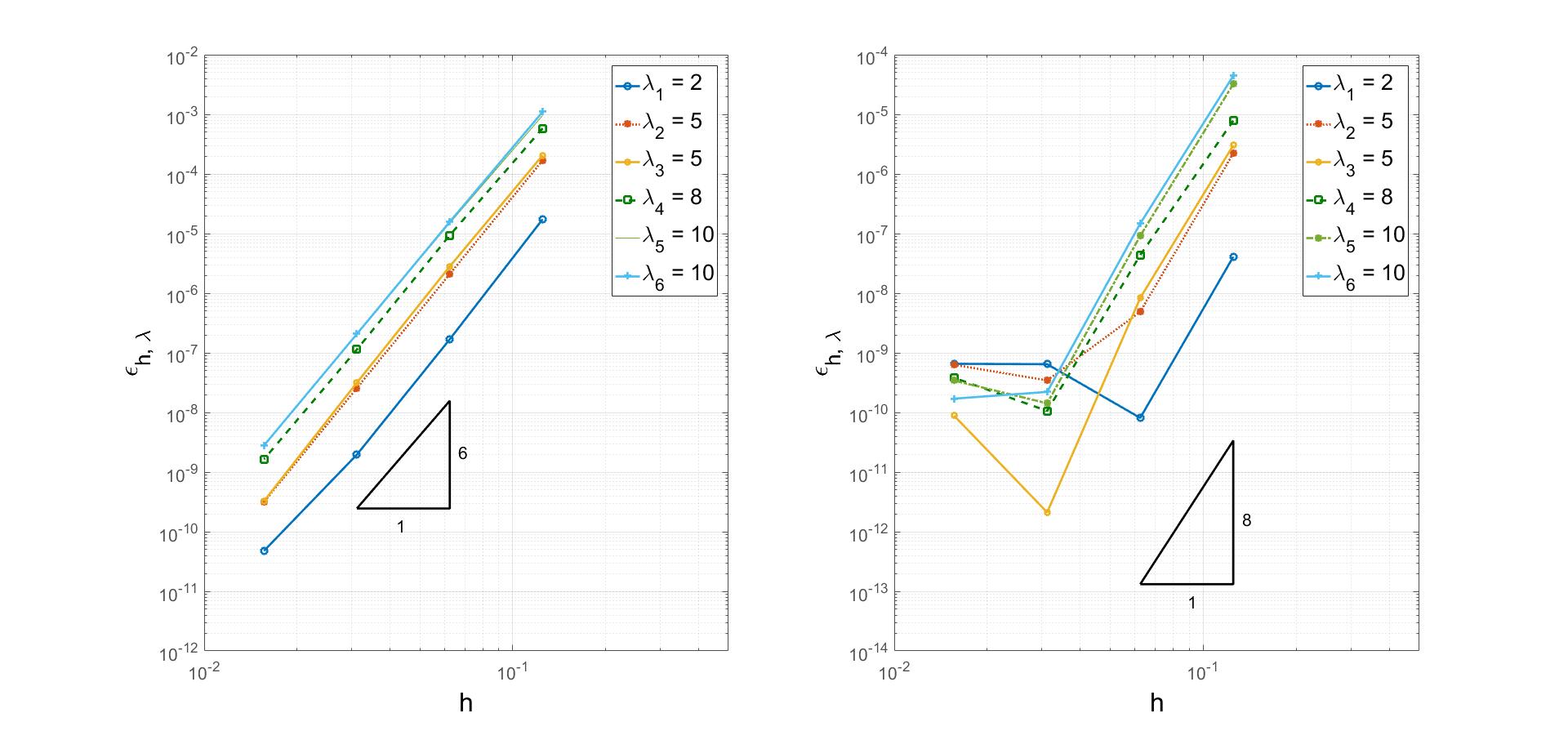} 
\caption{Test 7.1. Convergence plot for the eigenvalues for the sequence of meshes $\mathcal{V}_h$ with $k=1,2,3,4$.}
\label{Figure2}
\end{figure}

\begin{figure}[!h]
\centering
\includegraphics[scale=0.2]{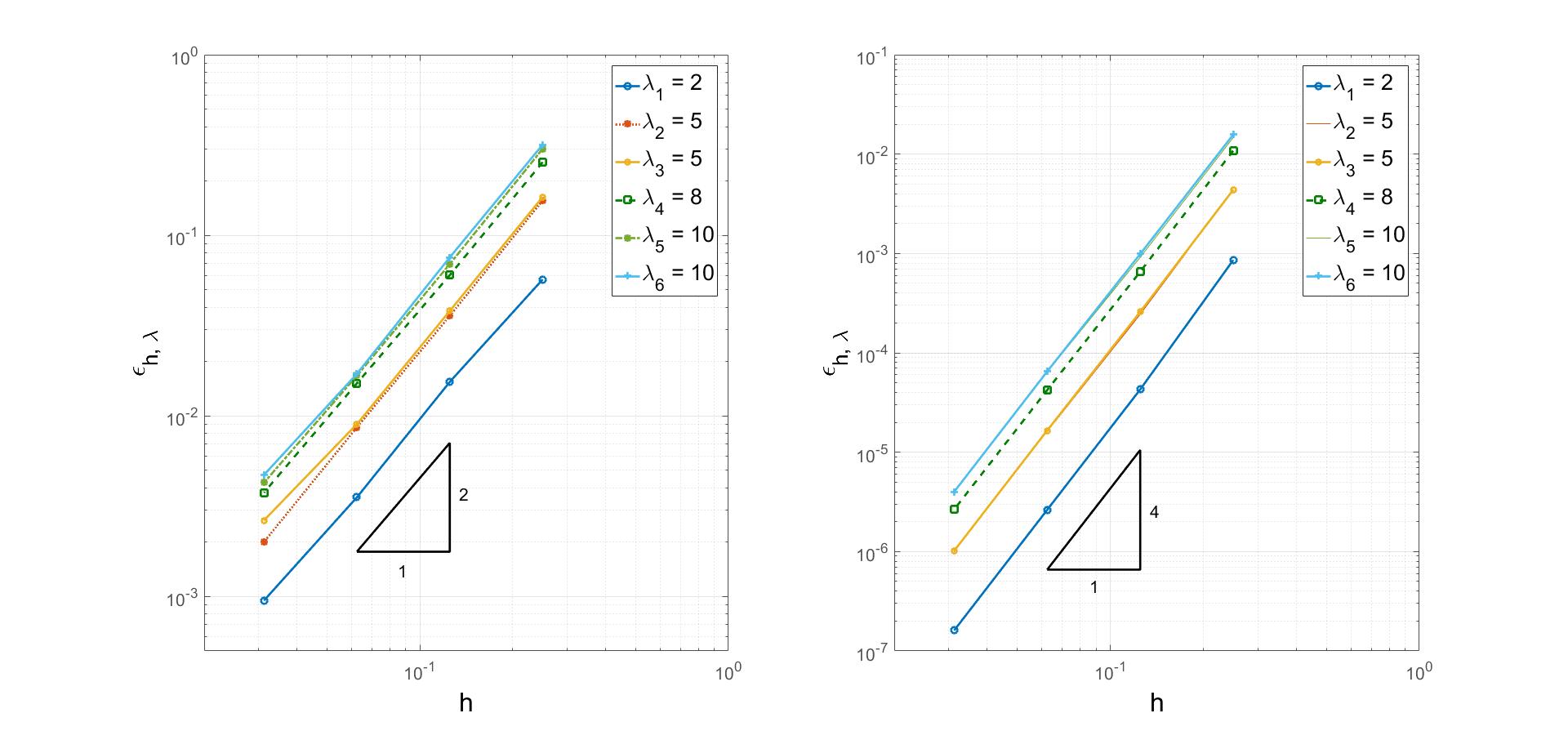} \\
\includegraphics[scale=0.2]{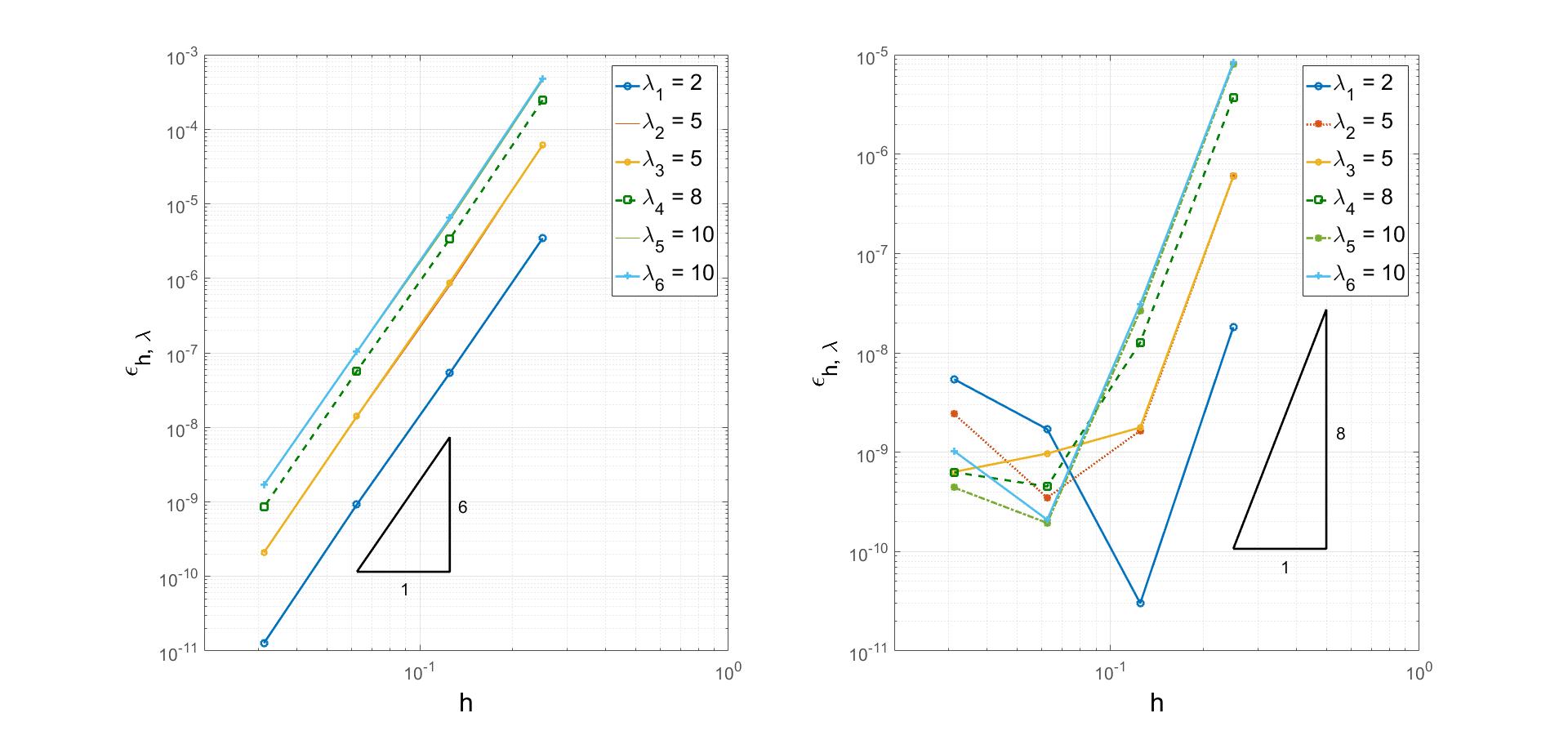}
\caption{Test 7.1. Convergence plot for the eigenvalues for the sequence of meshes $\mathcal{T}_h$ with $k=1,2,3,4$.}
\label{Figure3}
\end{figure}

\begin{figure}[!h]
\centering
\includegraphics[scale=0.2]{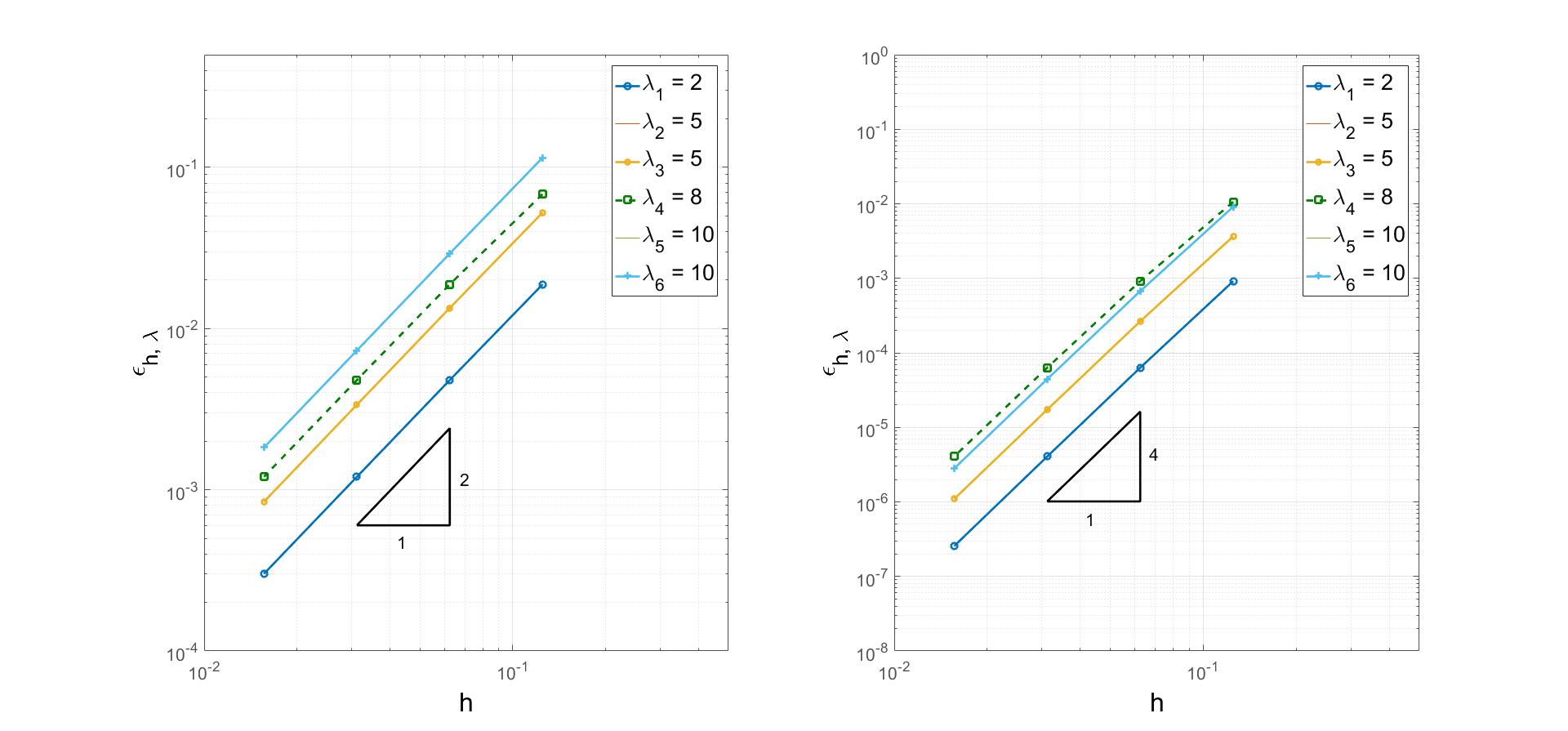} \\
\includegraphics[scale=0.2]{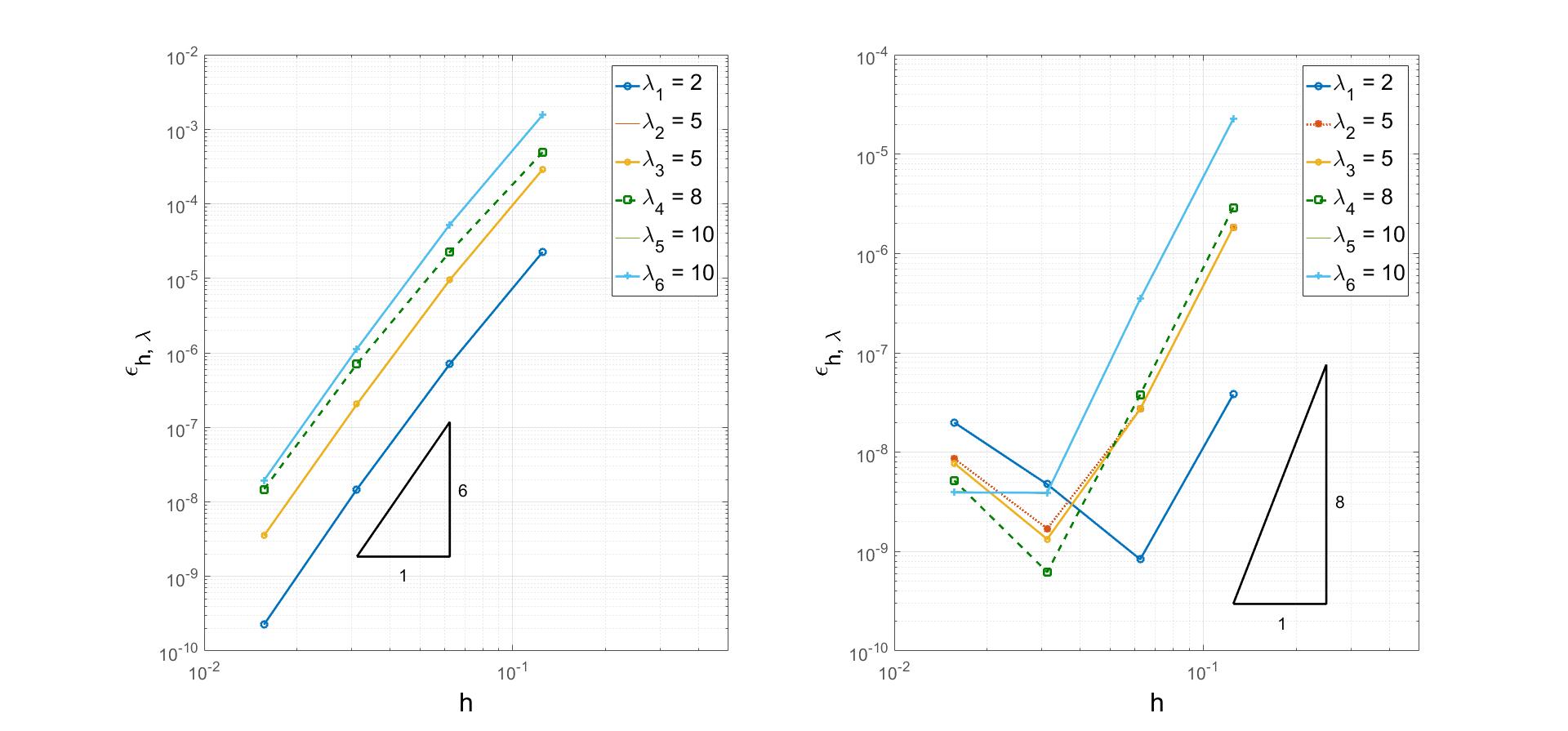}
    \caption{Test 7.1. Convergence plot for the eigenvalues for the sequence of meshes $\mathcal{Q}_h$ with $k=1,2,3,4$.}
\label{Figure4}
\end{figure}

\begin{figure}[!h]
\centering
\includegraphics[scale=0.2]{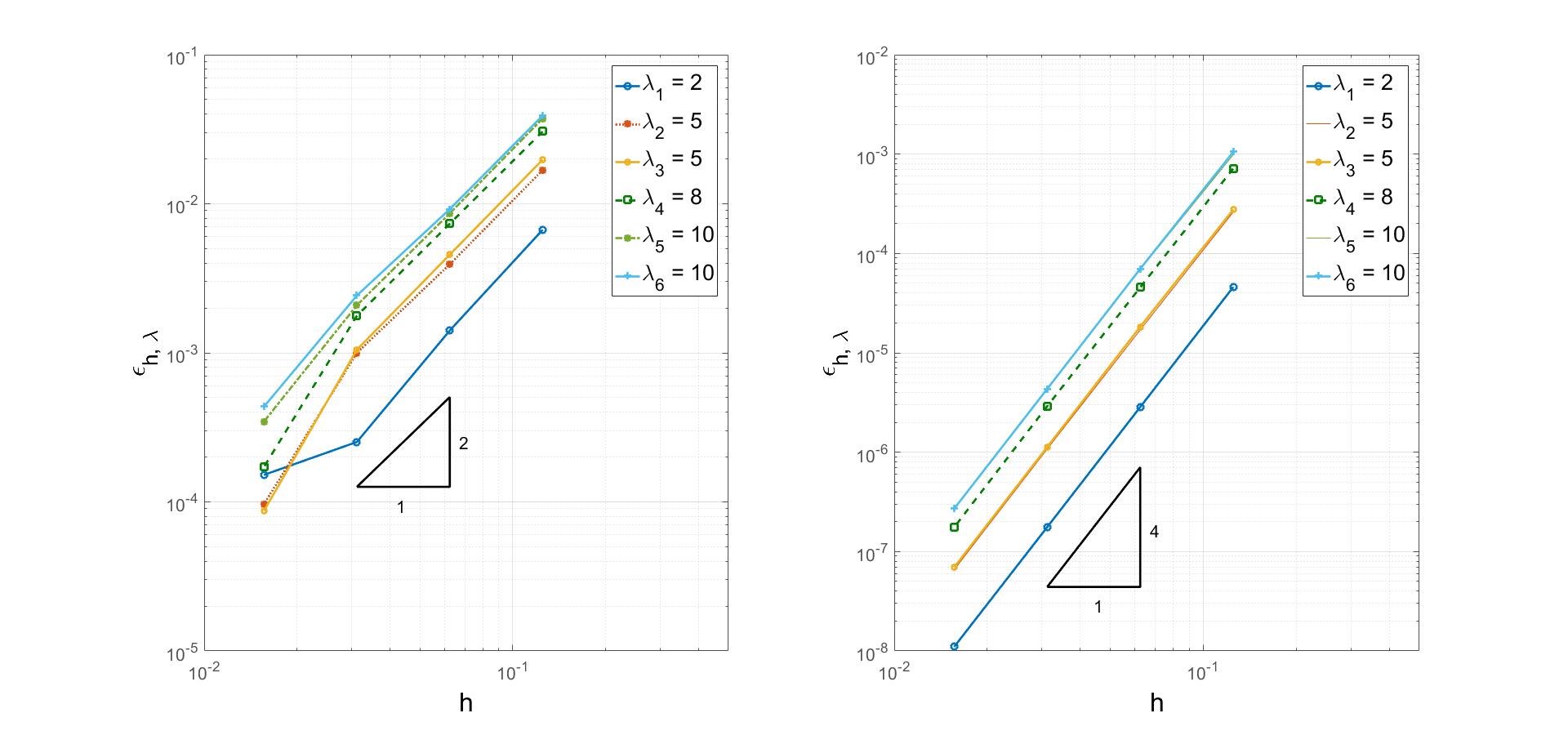} \\
\includegraphics[scale=0.2]{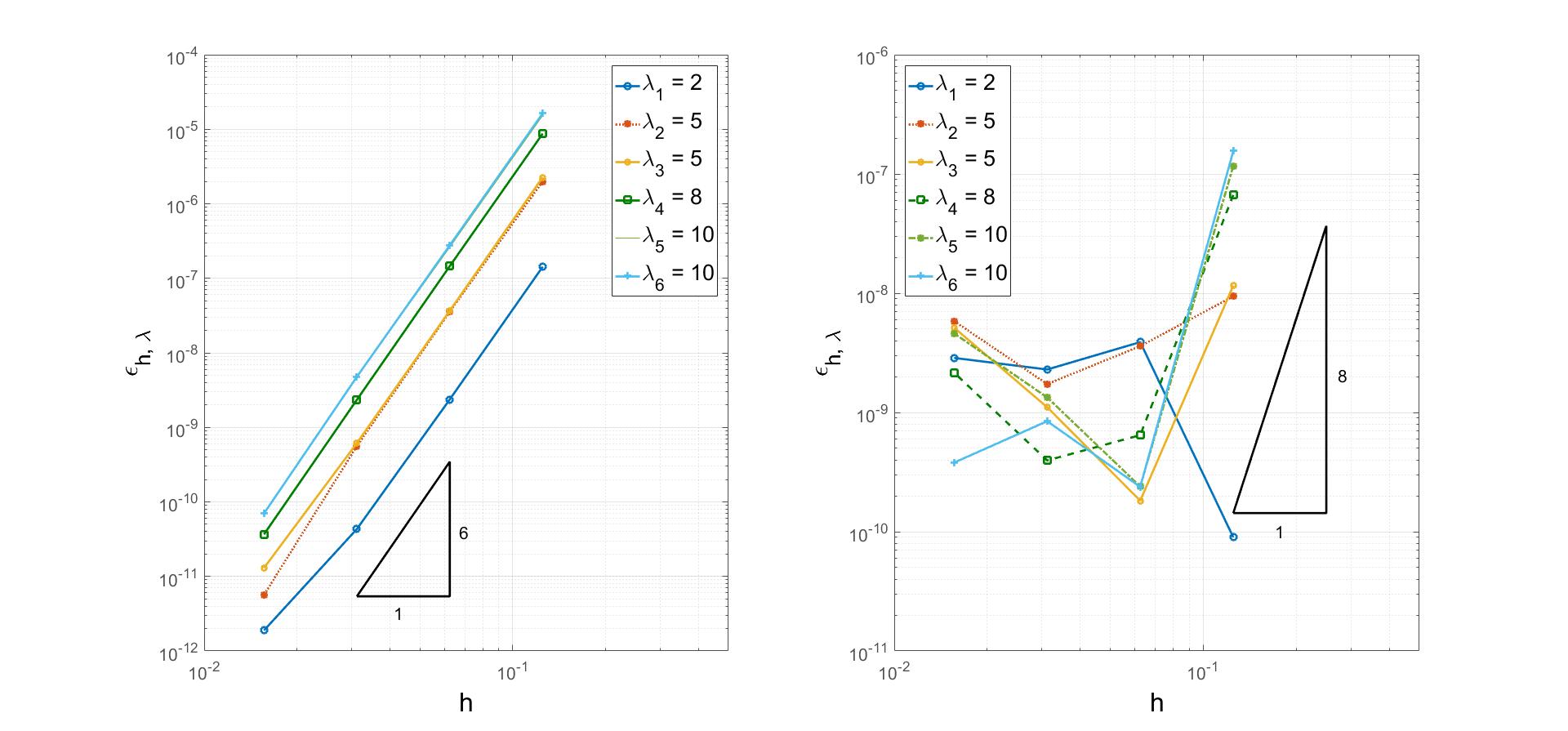}
\caption{Test 7.1. Convergence plot for the eigenvalues for the sequence of meshes $\mathcal{W}_h$ with $k=1,2,3,4$.}
\label{Figure5}
\end{figure}
We notice that for $k=1,2,3$ the theoretical predictions of Section \ref{sub:convstab} and Theorem \ref{thm:stab} are confirmed for all the adopted meshes (noticed that the eigenfunctions are analytical).
Whereas for $k=4$, the errors are close to the machine precision,
but for small values of $h$  they become larger. This fact is natural and stems from the conditioning of the matrices involved in the computation of the VEM solution. Indeed, as in standard FEM, their condition
numbers become larger when we consider higher VEM approximation degrees. 
The choice of the diagonal stabilization \eqref{eq:recipe}, cures this
problem, see Figure  \ref{Figure6}: for small values of the mesh size $h$ we have errors close to the machine precision.

\begin{figure}[!h]
\centering
\includegraphics[scale=0.2]{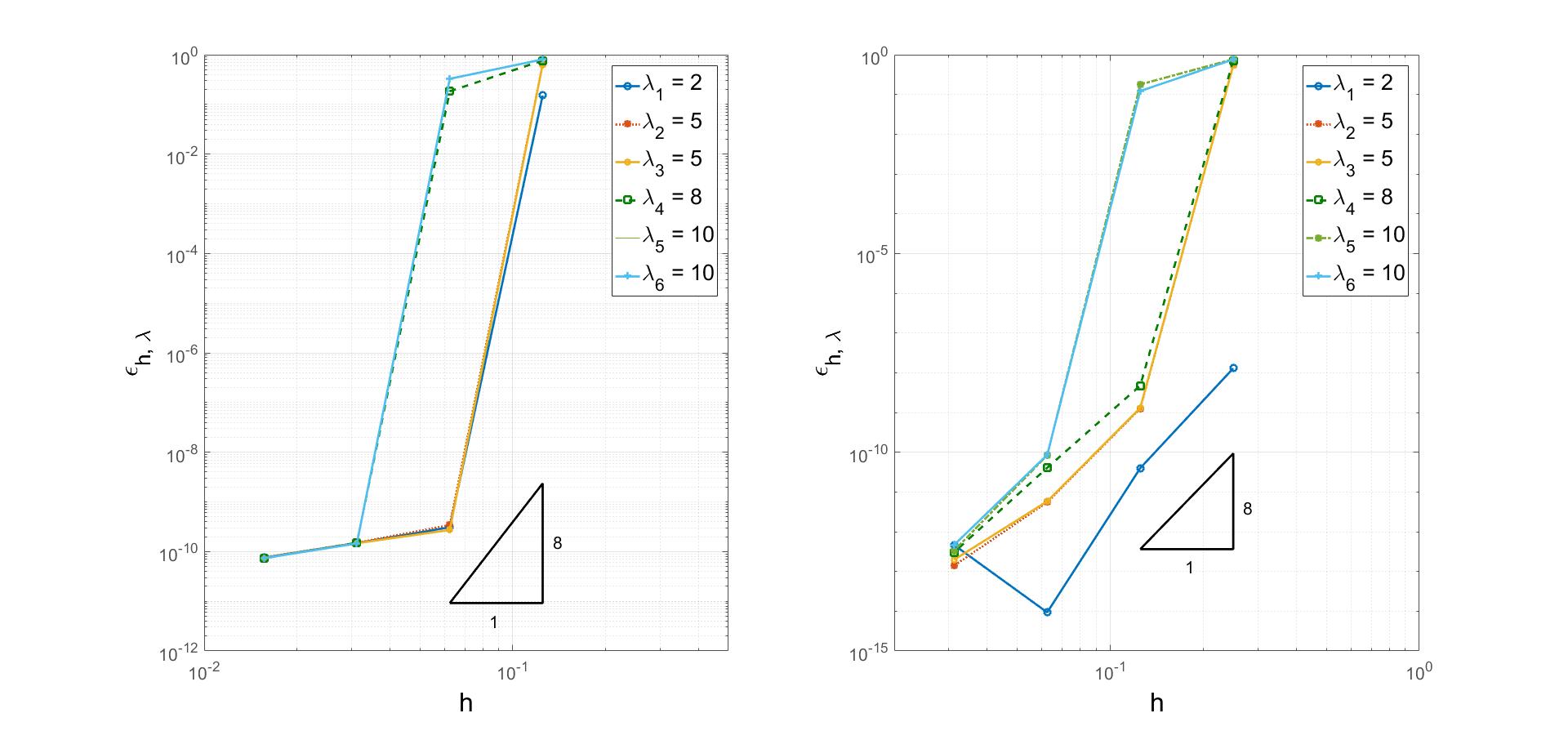} \\
\includegraphics[scale=0.2]{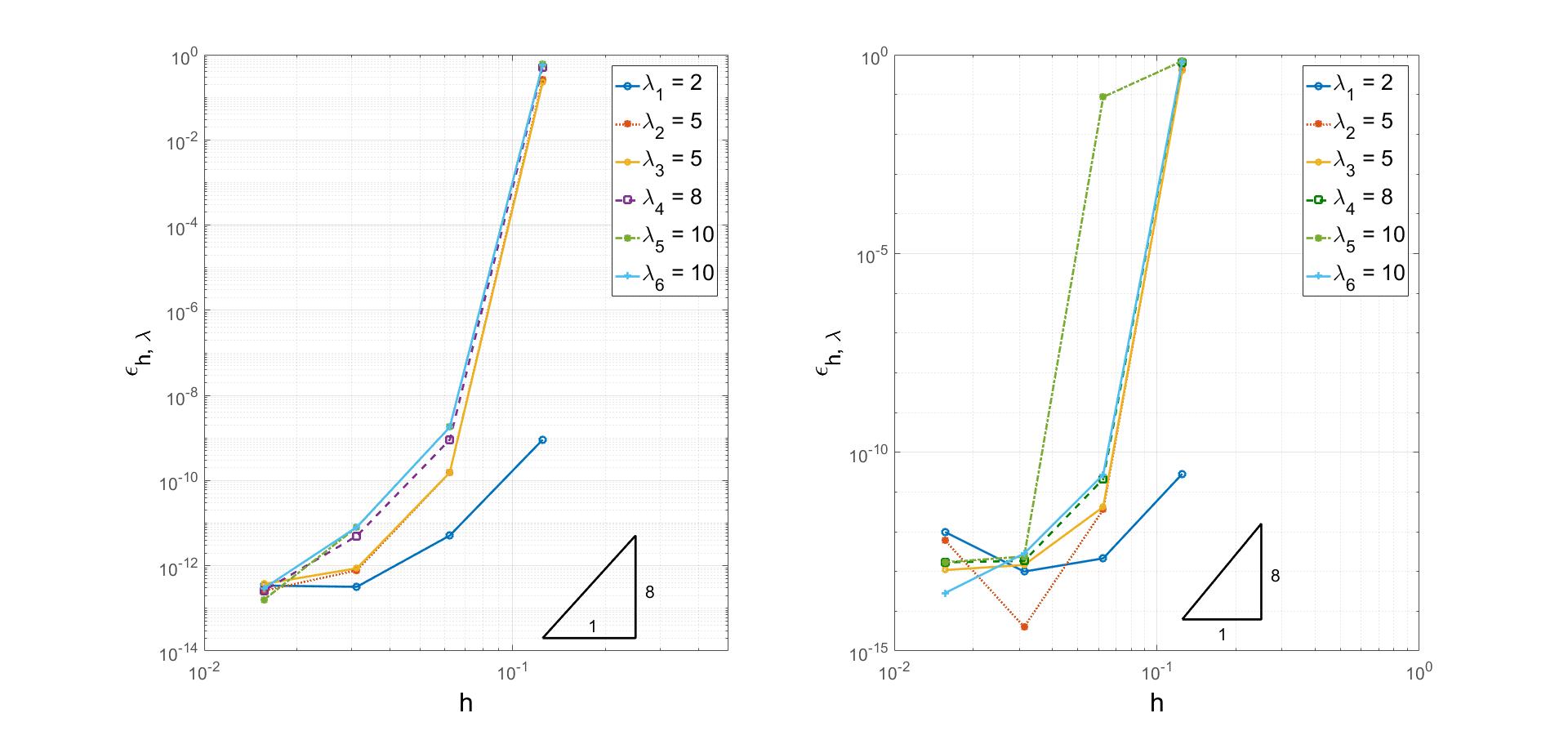}
\caption{Test 7.1. Convergence plots for the eigenvalues for the sequence of meshes $\mathcal{V}_h$, $\mathcal{T}_h$, $\mathcal{Q}_h$, $\mathcal{W}_h$ with $k=4$ and diagonal stabilization \eqref{eq:recipe}.}
\label{Figure6}
\end{figure}
\end{test}

\begin{test}
\label{test2}
We consider the same eigenvalue problem of Test \ref{test1} and we study the performance of the VEM discretization by comparing the non stabilized virtual method \eqref{eq:discreteEigPbm} with the stabilized one \eqref{eq:discreteEigPbm2} with the stabilizations above introduced (cf. \eqref{eq:original} and \eqref{eq:recipe}).
We use the polygonal decompositions listed above and polynomial degree of accuracy $k=1$ and $k=4$. In Table \ref{table1}, \ref{table2}, \ref{table3}, \ref{table4} we show respectively the results for the sequences of meshes $\mathcal{V}_h$, $\mathcal{T}_h$, $\mathcal{Q}_h$ and $\mathcal{W}_h$ for the lowest degree $k=1$. 

We notice that the matrices stemming from the non stabilized bilinear forms are not positive definite therefore we can not use the MATLAB routine \textbf{eigs} for sparse matrices. We overcome this problem by using for $k=1$ the MATLAB routine \textbf{eig} for full matrices. Whereas for $k=4$ we  approximate the non stabilized bilinear forms by considering the stabilizations $\tilde{S}^P(\cdot, \cdot)$ in \eqref{eq:original} with $\tau^P = 1e-16$.

Moreover, we observe that the stabilized method (with both stabilizations) and the non stabilized method exhibit almost identical errors for the low order $k=1$, at least for this example and with the adopted meshes. For $k=4$, as observed above, the diagonal stabilization shows for small values of $h$ better performances.

\begin{table}[!h]
\centering
\begin{tabular}{ll*{4}{c}}
\toprule
&    h          & $\lambda_1 = 2\pi^2$        & $\lambda_2 =  5\pi^2$      & $\lambda_4 = 8\pi^2$       & $\lambda_5 = 10\pi^2$       \\
\midrule
\multirow{4}*{scalar}
& $1/8$  & $1.65354e-2$ &  $3.65029e-2$ &  $5.19575e-2$ & $7.10714e-2$\\
& $1/16$ & $4.41849e-3$ &  $9.57783e-3$ &  $1.44610e-2$ & $1.75766e-2$ \\
& $1/32$ & $8.20606e-4$ &  $2.32037e-3$ &  $3.76823e-3$ & $4.71461e-3$ \\
& $1/64$ & $1.66238e-4$ &  $5.28919e-4$ &  $8.72219e-4$ & $9.64257e-4$ \\
\midrule
\multirow{4}*{diagonal}
& $1/8$  & $1.24793e-2$ &  $1.31338e-2$ &  $2.71426e-1$ & $3.95806e-1$\\
& $1/16$ & $4.37700e-3$ &  $8.84813e-3$ &  $1.17396e-2$ & $1.40754e-2$ \\
& $1/32$ & $8.64749e-4$ &  $2.39206e-3$ &  $3.79681e-3$ & $4.74322e-3$ \\
& $1/64$ & $1.80022e-4$ &  $5.61628e-4$ &  $9.21153e-4$ & $1.02400e-3$ \\
\midrule
\multirow{4}*{non stab}
& $1/8$  & $1.76616e-2$ &  $4.09826e-2$ &  $6.38367e-2$ & $8.48765e-2$\\
& $1/16$ & $4.61104e-3$ &  $1.02615e-2$ &  $1.55774e-2$ & $1.92623e-2$ \\
& $1/32$ & $8.78350e-4$ &  $2.46463e-3$ &  $4.00339e-3$ & $5.03841e-3$ \\
& $1/64$ & $1.80805e-4$ &  $5.66302e-4$ &  $9.34035e-4$ & $1.04229e-3$ \\
\bottomrule
\end{tabular}
\vspace{0.2cm}
\caption{Test 7.2. $\epsilon_{h,\lambda}$ for the sequence of meshes $\mathcal{V}_h$ with $k=1$ using stabilized form and non stabilized bilinear form (cf. \eqref{eq:discreteEigPbm2} and \eqref{eq:discreteEigPbm}).}
\label{table1}
\end{table}

\begin{table}[!h]
\centering
\begin{tabular}{ll*{4}{c}}
\toprule
&    h          & $\lambda_1 = 2\pi^2$        & $\lambda_2 =  5\pi^2$      & $\lambda_4 = 8\pi^2$       & $\lambda_5 = 10\pi^2$       \\
\midrule
\multirow{4}*{scalar}
& $1/4$  & $5.70169e-2$ &  $1.63047e-1$ &  $2.54353e-1$ & $3.17892e-1$\\
& $1/8$  & $1.54372e-2$ &  $3.81409e-2$ &  $6.02252e-2$ & $7.55284e-2$ \\
& $1/16$ & $3.54381e-3$ &  $9.00524e-3$ &  $1.51584e-2$ & $1.71163e-2$ \\
& $1/32$ & $9.51096e-4$ &  $2.64316e-3$ &  $3.76702e-3$ & $4.71650e-3$ \\
\midrule
\multirow{4}*{diagonal}
& $1/4$  & $5.70169e-2$ &  $1.63047e-1$ &  $2.54353e-1$ & $3.17892e-1$\\
& $1/8$  & $1.54372e-2$ &  $3.81409e-2$ &  $6.02252e-2$ & $7.55284e-2$ \\
& $1/16$ & $3.54381e-3$ &  $9.00524e-3$ &  $1.51584e-2$ & $1.71163e-2$ \\
& $1/32$ & $9.51096e-4$ &  $2.64316e-3$ &  $3.76702e-3$ & $4.71650e-3$ \\
\midrule
\multirow{4}*{non stab}
& $1/4$  & $5.70169e-2$ &  $1.63047e-1$ &  $2.54353e-1$ & $3.17892e-1$\\
& $1/8$  & $1.54372e-2$ &  $3.81409e-2$ &  $6.02252e-2$ & $7.55284e-2$ \\
& $1/16$ & $3.54381e-3$ &  $9.00524e-3$ &  $1.51584e-2$ & $1.71163e-2$ \\
& $1/32$ & $9.51096e-4$ &  $2.64316e-3$ &  $3.76702e-3$ & $4.71650e-3$ \\ 
\bottomrule
\end{tabular}
\vspace{0.2cm}
\caption{Test 7.2. $\epsilon_{h,\lambda}$ for the sequence of meshes $\mathcal{T}_h$ with $k=1$ using stabilized and non stabilized bilinear form (cf. \eqref{eq:discreteEigPbm2} and \eqref{eq:discreteEigPbm}).}
\label{table2}
\end{table}

\begin{table}[!h]
\centering
\begin{tabular}{ll*{4}{c}}
\toprule
&    h          & $\lambda_1 = 2\pi^2$        & $\lambda_2 =  5\pi^2$      & $\lambda_4 = 8\pi^2$       & $\lambda_5 = 10\pi^2$       \\
\midrule
\multirow{4}*{scalar}
& $1/8$  & $1.88180e-2$ &  $5.24872e-2$ &  $6.83463e-2$ & $1.14922e-1$\\
& $1/16$ & $4.79185e-3$ &  $1.34179e-2$ &  $1.88180e-2$ & $2.92254e-2$ \\
& $1/32$ & $1.20310e-3$ &  $3.36892e-3$ &  $4.79185e-3$ & $7.30994e-3$ \\
& $1/64$ & $3.01091e-4$ &  $8.43074e-4$ &  $1.20310e-3$ & $1.82733e-3$ \\
\midrule
\multirow{4}*{diagonal}
& $1/8$  & $1.34980e-2$ &  $3.00644e-2$ &  $1.31356e-1$ & $2.29494e-1$\\
& $1/16$ & $4.46838e-3$ &  $1.21017e-2$ &  $1.34980e-2$ & $2.61752e-2$ \\
& $1/32$ & $1.18304e-3$ &  $3.28834e-3$ &  $4.46838e-3$ & $7.12721e-3$ \\
& $1/64$ & $2.99840e-4$ &  $8.38065e-4$ &  $1.18304e-3$ & $1.81604e-3$ \\
\midrule
\multirow{4}*{non stab}
& $1/8$  & $1.96813e-2$ &  $5.61954e-2$ &  $8.40240e-2$ & $1.24337e-1$\\
& $1/16$ & $4.84403e-3$ &  $1.36304e-2$ &  $1.96812e-2$ & $2.97190e-2$ \\
& $1/32$ & $1.20633e-3$ &  $3.38191e-3$ &  $4.84400e-3$ & $7.33940e-3$ \\
& $1/64$ & $3.01292e-4$ &  $8.43879e-4$ &  $1.20632e-3$ & $1.82915e-3$ \\
\bottomrule
\end{tabular}
\vspace{0.2cm}
\caption{Test 7.2. $\epsilon_{h,\lambda}$ for the sequence of meshes $\mathcal{Q}_h$ with $k=1$ using stabilized and non stabilized bilinear form (cf. \eqref{eq:discreteEigPbm2} and \eqref{eq:discreteEigPbm}).}
\label{table3}
\end{table}

\begin{table}[!h]
\centering
\begin{tabular}{ll*{4}{c}}
\toprule
&    h          & $\lambda_1 = 2\pi^2$        & $\lambda_2 =  5\pi^2$      & $\lambda_4 = 8\pi^2$       & $\lambda_5 = 10\pi^2$       \\
\midrule
\multirow{4}*{scalar}
& $1/8$  & $6.69124e-3$ &  $1.97714e-2$ &  $3.05479e-2$ & $3.91016e-2$\\
& $1/16$ & $1.41184e-3$ &  $4.55875e-3$ &  $7.34674e-3$ & $9.20778e-3$ \\
& $1/32$ & $2.52359e-4$ &  $1.04890e-3$ &  $1.77164e-3$ & $2.42393e-3$ \\
& $1/64$ & $1.51786e-4$ &  $8.71580e-5$ &  $1.71693e-4$ & $4.35676e-4$ \\
\midrule
\multirow{4}*{diagonal}
& $1/8$  & $5.41188e-3$ &  $1.40607e-2$ &  $1.70178e-2$ & $1.73017e-2$\\
& $1/16$ & $1.19325e-3$ &  $3.91139e-3$ &  $6.05178e-3$ & $7.56699e-3$ \\
& $1/32$ & $2.07297e-4$ &  $9.27293e-4$ &  $1.56408e-3$ & $2.16149e-3$ \\
& $1/64$ & $1.62207e-4$ &  $6.16087e-5$ &  $1.28674e-4$ & $3.81062e-4$ \\
\midrule
\multirow{4}*{non stab}
& $1/8$  & $6.06830e-3$ &  $1.83072e-2$ &  $2.85770e-2$ & $3.71010e-2$\\
& $1/16$ & $1.23021e-3$ &  $4.12929e-3$ &  $6.68713e-3$ & $8.43576e-3$ \\
& $1/32$ & $2.09025e-4$ &  $9.37654e-3$ &  $1.59110e-3$ & $2.20261e-3$ \\
& $1/64$ & $1.62237e-4$ &  $6.14193e-5$ &  $1.28187e-4$ & $3.80246e-4$ \\
\bottomrule
\end{tabular}
\vspace{0.2cm}
\caption{Test 7.2. $\epsilon_{h,\lambda}$ for the sequence of meshes $\mathcal{W}_h$ with $k=1$ using stabilized and non stabilized bilinear form (cf. \eqref{eq:discreteEigPbm2} and \eqref{eq:discreteEigPbm}).}
\label{table4}
\end{table}

\begin{table}[!h]
\centering
\begin{tabular}{ll*{4}{c}}
\toprule
&    h          & $\lambda_1 = 2\pi^2$        & $\lambda_2 =  5\pi^2$      & $\lambda_4 = 8\pi^2$       & $\lambda_5 = 10\pi^2$       \\
\midrule
\multirow{4}*{scalar}
& $1/8$  & $4.08694e-08$ &  $3.03255e-06$ &  $7.85639e-06$ & $4.42827e-05$\\
& $1/16$ & $8.10132e-11$ &  $8.32945e-09$ &  $4.36448e-08$ & $1.47462e-07$ \\
& $1/32$ & $6.45147e-10$ &  $2.10939e-12$ &  $1.06202e-10$ & $2.21067e-10$ \\
& $1/64$ & $6.54068e-10$ &  $8.81773e-11$ &  $3.81458e-10$ & $1.70565e-10$ \\
\midrule
\multirow{4}*{diagonal}
& $1/8$  & $1.50768e-01$ &  $6.34515e-01$ &  $7.65639e-01$ & $8.05257e-01$\\
& $1/16$ & $3.09080e-10$ &  $2.72011e-10$ &  $1.83621e-01$ & $3.27562e-01$ \\
& $1/32$ & $1.48304e-10$ &  $1.47156e-10$ &  $1.50437e-10$ & $1.45682e-10$ \\
& $1/64$ & $7.55412e-11$ &  $7.34704e-11$ &  $7.29845e-11$ & $7.36543e-11$ \\
\midrule
\multirow{4}*{non stab}
& $1/8$  & $4.09387e-08$ &  $3.04614e-06$ &  $7.91452e-06$ & $4.46984e-05$\\
& $1/16$ & $8.09754e-11$ &  $8.34021e-09$ &  $4.37285e-08$ & $1.47462e-07$ \\
& $1/32$ & $6.44920e-10$ &  $1.28349e-12$ &  $1.05962e-10$ & $2.22278e-10$ \\
& $1/64$ & $6.54187e-10$ &  $8.82860e-11$ &  $3.81741e-10$ & $1.70739e-10$ \\
\bottomrule
\end{tabular}
\vspace{0.2cm}
\caption{Test 7.2. $\epsilon_{h,\lambda}$ for the sequence of meshes $\mathcal{V}_h$ with $k=4$ using stabilized and non stabilized bilinear form (cf. \eqref{eq:discreteEigPbm2} and \eqref{eq:discreteEigPbm}).}
\label{table5}
\end{table}

\begin{table}[!h]
\centering
\begin{tabular}{ll*{4}{c}}
\toprule
&    h          & $\lambda_1 = 2\pi^2$        & $\lambda_2 =  5\pi^2$      & $\lambda_4 = 8\pi^2$       & $\lambda_5 = 10\pi^2$       \\
\midrule
\multirow{4}*{scalar}
& $1/4$  & $1.81535e-08$ &  $5.97563e-07$ &  $3.71995e-06$ & $8.31271e-06$\\
& $1/8$  & $3.01783e-11$ &  $1.77195e-09$ &  $1.27304e-08$ & $3.09954e-08$ \\
& $1/16$ & $1.70546e-09$ &  $9.69266e-10$ &  $4.52175e-10$ & $2.09013e-10$ \\
& $1/32$ & $5.42087e-09$ &  $6.33818e-10$ &  $6.29748e-10$ & $1.02642e-09$ \\
\midrule
\multirow{4}*{diagonal}
& $1/4$  & $1.29083e-08$ &  $5.43812e-01$ &  $7.14882e-01$ & $7.59974e-01$\\
& $1/8$  & $3.97711e-11$ &  $1.30016e-09$ &  $4.74659e-09$ & $1.22090e-01$ \\
& $1/16$ & $9.35909e-15$ &  $5.81516e-12$ &  $4.05619e-11$ & $8.57591e-11$ \\
& $1/32$ & $4.62015e-13$ &  $1.97548e-13$ &  $3.04890e-13$ & $4.64787e-13$ \\
\midrule
\multirow{4}*{non stab}
& $1/4$  & $1.81674e-08$ &  $5.97558e-07$ &  $3.71998e-06$ & $8.31282e-06$\\
& $1/8$  & $3.16576e-11$ &  $1.77308e-09$ &  $1.27284e-08$ & $3.09934e-08$ \\
& $1/16$ & $1.70289e-09$ &  $9.67242e-10$ &  $4.51765e-10$ & $2.07578e-10$ \\
& $1/32$ & $5.41948e-09$ &  $6.33574e-10$ &  $6.30034e-10$ & $1.02564e-09$ \\
\bottomrule
\end{tabular}
\vspace{0.2cm}
\caption{Test 7.2. $\epsilon_{h,\lambda}$ for the sequence of meshes $\mathcal{T}_h$ with $k=4$ using stabilized and non stabilized bilinear form (cf. \eqref{eq:discreteEigPbm2} and \eqref{eq:discreteEigPbm}).}
\label{table6}
\end{table}

\begin{table}[!h]
\centering
\begin{tabular}{ll*{4}{c}}
\toprule
&    h          & $\lambda_1 = 2\pi^2$        & $\lambda_2 =  5\pi^2$      & $\lambda_4 = 8\pi^2$       & $\lambda_5 = 10\pi^2$       \\
\midrule
\multirow{4}*{scalar}
& $1/8$  & $3.81184e-08$ &  $1.84365e-06$ &  $2.90250e-06$ & $2.25968e-05$\\
& $1/16$ & $8.34788e-10$ &  $2.72448e-08$ &  $3.80267e-08$ & $3.49035e-07$ \\
& $1/32$ & $4.75842e-09$ &  $1.33322e-09$ &  $6.22220e-10$ & $3.91731e-09$ \\
& $1/64$ & $1.99260e-08$ &  $7.75128e-09$ &  $5.20401e-09$ & $3.93187e-09$ \\
\midrule
\multirow{4}*{diagonal}
& $1/8$  & $9.12570e-10$ &  $2.24086e-01$ &  $5.15054e-01$ & $5.83981e-01$\\
& $1/16$ & $5.13094e-12$ &  $1.54985e-10$ &  $9.12586e-10$ & $1.80273e-09$ \\
& $1/32$ & $3.20728e-13$ &  $8.71979e-13$ &  $5.02799e-12$ & $8.12758e-12$ \\
& $1/64$ & $3.42866e-13$ &  $3.75371e-13$ &  $2.60074e-13$ & $2.95603e-13$ \\
\midrule
\multirow{4}*{non stab}
& $1/8$  & $3.81202e-08$ &  $1.84390e-06$ &  $2.90262e-06$ & $2.26029e-05$\\
& $1/16$ & $8.34530e-10$ &  $2.72483e-08$ &  $3.80285e-08$ & $3.49125e-07$ \\
& $1/32$ & $4.75887e-09$ &  $1.33338e-09$ &  $6.22323e-10$ & $3.91791e-09$ \\
& $1/64$ & $1.99268e-08$ &  $7.75217e-09$ &  $5.20491e-09$ & $3.93284e-09$ \\
\bottomrule
\end{tabular}
\vspace{0.2cm}
\caption{Test 7.2. $\epsilon_{h,\lambda}$ for the sequence of meshes $\mathcal{Q}_h$ with $k=4$ using stabilized and non stabilized bilinear form (cf. \eqref{eq:discreteEigPbm2} and \eqref{eq:discreteEigPbm}).}
\label{table7}
\end{table}

\begin{table}[!h]
\centering
\begin{tabular}{ll*{4}{c}}
\toprule
&    h          & $\lambda_1 = 2\pi^2$        & $\lambda_2 =  5\pi^2$      & $\lambda_4 = 8\pi^2$       & $\lambda_5 = 10\pi^2$       \\
\midrule
\multirow{4}*{scalar}
& $1/8$  & $8.97772e-11$ &  $1.15946e-08$ &  $6.67721e-08$ & $1.55547e-07$\\
& $1/16$ & $3.88976e-09$ &  $1.81961e-10$ &  $6.45283e-10$ & $2.37503e-10$ \\
& $1/32$ & $2.29197e-09$ &  $1.10633e-09$ &  $3.96093e-10$ & $8.44730e-10$ \\
& $1/64$ & $2.86708e-09$ &  $5.13913e-09$ &  $2.15705e-09$ & $3.80039e-10$ \\
\midrule
\multirow{4}*{diagonal}
& $1/8$  & $2.85391e-11$ &  $4.01092e-01$ &  $6.25071e-01$ & $6.71266e-01$\\
& $1/16$ & $2.13639e-13$ &  $4.20180e-12$ &  $2.11403e-11$ & $2.73006e-11$ \\
& $1/32$ & $9.95303e-14$ &  $1.45713e-13$ &  $1.83942e-13$ & $2.82788e-13$ \\
& $1/64$ & $9.81984e-13$ &  $1.08277e-13$ &  $1.69903e-13$ & $2.86532e-14$ \\
\midrule
\multirow{4}*{non stab}
& $1/8$  & $9.63542e-11$ &  $1.16189e-08$ &  $6.70782e-08$ & $1.56541e-07$\\
& $1/16$ & $3.85156e-09$ &  $2.89697e-10$ &  $5.72624e-10$ & $2.81162e-10$ \\
& $1/32$ & $2.31192e-09$ &  $1.08736e-09$ &  $3.66330e-10$ & $8.35562e-10$ \\
& $1/64$ & $2.86720e-09$ &  $5.13207e-09$ &  $2.14902e-09$ & $3.75120e-10$ \\
\bottomrule
\end{tabular}
\vspace{0.2cm}
\caption{Test 7.2. $\epsilon_{h,\lambda}$ for the sequence of meshes $\mathcal{W}_h$ with $k=4$ using stabilized and non stabilized bilinear form (cf. \eqref{eq:discreteEigPbm2} and \eqref{eq:discreteEigPbm}).}
\label{table8}
\end{table}

We can observe that the results in the tables confirm the theoretical rates of convergence stated in Section \ref{sub:convnonstab} and \ref{sub:convstab}. 
 In Table \ref{table2} we observe that the errors, as expected, are identical for the three cases. Indeed for the triangular meshes $\mathcal{T}_h$ the virtual space $V_h^1(T)$ corresponds to the space of linear polynomials, then
\[
S^T\Big((I-\Pi_1^{\nabla})u_h,(I-\Pi_1^{\nabla})v_h\Big) = \tilde{S}^T\Big((I-\Pi_1^{0})u_h,(I-\Pi_1^{0})v_h\Big) = 0 
\]
for all $u_h, v_h \in V_h^1(T)$, therefore the non stabilized method \eqref{eq:discreteEigPbm} and the stabilized method \eqref{eq:discreteEigPbm2} are equivalent.

Finally, we test the robustness of the method with respect to the stabilization parameter $\tau_P$  in \eqref{eq:original}. In Figure \ref{parameter} we plot the first four eigenvalues obtained by using the method \eqref{eq:discreteEigPbm2} with $k=1$ for the sequence of Voronoi meshes $\mathcal{V}_h$ in Test \ref{test1} as a function of stabilization parameter $\tau_P$.

\begin{figure}[!h]
\centering
\includegraphics[scale=0.2]{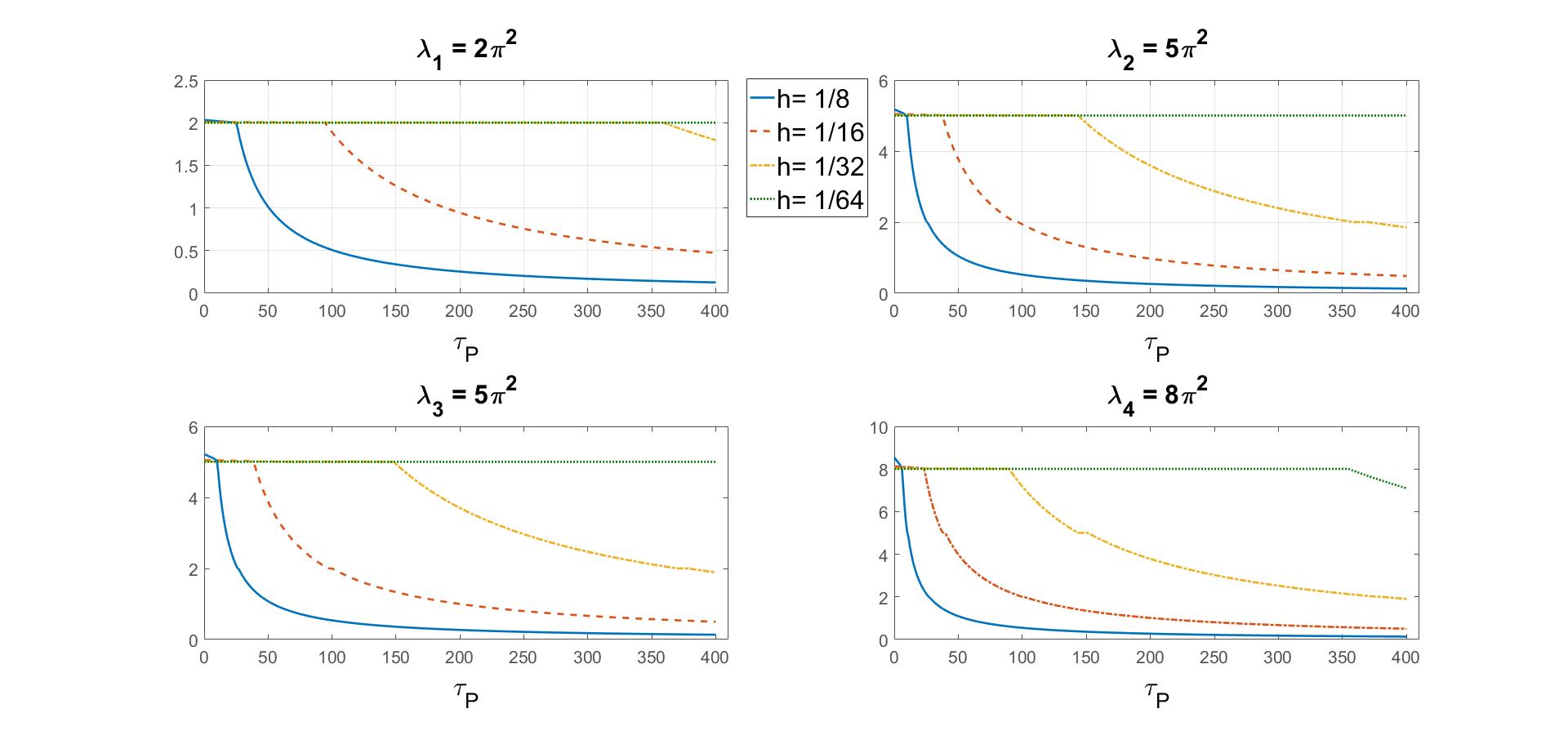} 
\caption{Test 7.2. First four eigenvalues as a function of the stabilization parameter $\tau_P$ (on the abscissas axis).}
\label{parameter}
\end{figure}

We observe that the method is robust with respect to the stabilization parameter $\tau_P$. For reasonable values of $\tau_P$ and for small enough values of the mesh size $h$, the numerical eigenvalues are not effected by the selection of the stabilization parameter.  Moreover, as expected, the ``critical parameter'',  i.e. the minimum value $\tau_P$ for which the associated method fails, goes like $h^{-2}$.
\end{test}

\begin{test}
\label{test3}
This test problem, as the following one, is taken from the benchmark singular solution set in \cite{dauge}.
We consider the square domain $\Omega = (-1, 1)^2$ splitted into two subdomains $\Omega_{\delta}$ and $\Omega_1$ (see the left plot in Figure \ref{split_domain}), and we study the eigenvalue problem on the square with Neumann homogeneous boundary conditions and discontinuous diffusivity. 
 \begin{figure}[!h]
\centering
\includegraphics[scale=0.8]{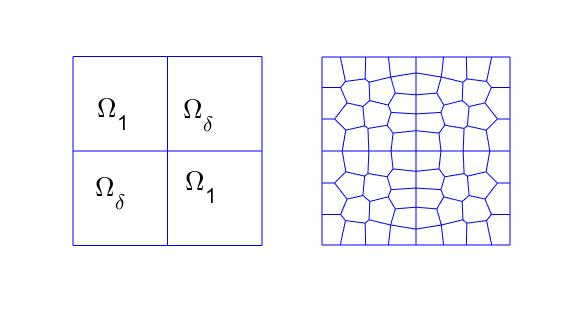}
\caption{Test 7.3. Left plot: subdivision of $\Omega$ into the subdomains $\Omega_{\delta}$ and $\Omega_1$. Right plot: Example of locally Voronoi decomposition of $\Omega$.}
\label{split_domain}
\end{figure}
In this test we consider the continuous bilinear form
\[
a_{\K}^P(u, v):= \int_P \K \nabla u \cdot \nabla v \, {\rm d}\bf{x}
\]
whose virtual approximation (see \cite{vemgeneral}) is given by
\begin{equation}\label{eq:ahgeneral}
a_{h, \K}^P(u_h,v_h)=  \int_P \K \Pi_{k-1}^0 \nabla u_h \cdot \Pi_{k-1}^0 \nabla v_h \, {\rm d}{\bf{x}}
+ S^P\Big((I-\Pi_k^{\nabla})u_h,(I-\Pi_k^{\nabla})v_h\Big)
\end{equation}
to be used in place of $a_{h}^P(u_h,v_h)$ (cf. \eqref{eq:discreteforms}) in Problem \eqref{eq:discreteSource2}.
We take $\K_{|\Omega_1} = I$ and $\K_{|\Omega_{\delta}} = \delta I$ with four different
values of $\delta$, namely $\delta = 0.50, 0.10, 0.01, 1e-8$.

We apply the Virtual Element method \eqref{eq:discreteEigPbm2} with the scalar stabilization \eqref{eq:original} using a sequence of Voronoi meshes with mesh diameter $h=1/2, 1/4, 1/8, 1/16$ (see the right plot in Figure \ref{split_domain} for an example of the adopted meshes). We show the plot of the convergence for the first and second computed eigenvalues in Figures \ref{test3_first} and \ref{test3_second}. We compute the errors $\epsilon_{h, \lambda}$ by comparing our results with the values given in \cite{dauge}.

\begin{figure}[!h]
\centering
\includegraphics[scale=0.2]{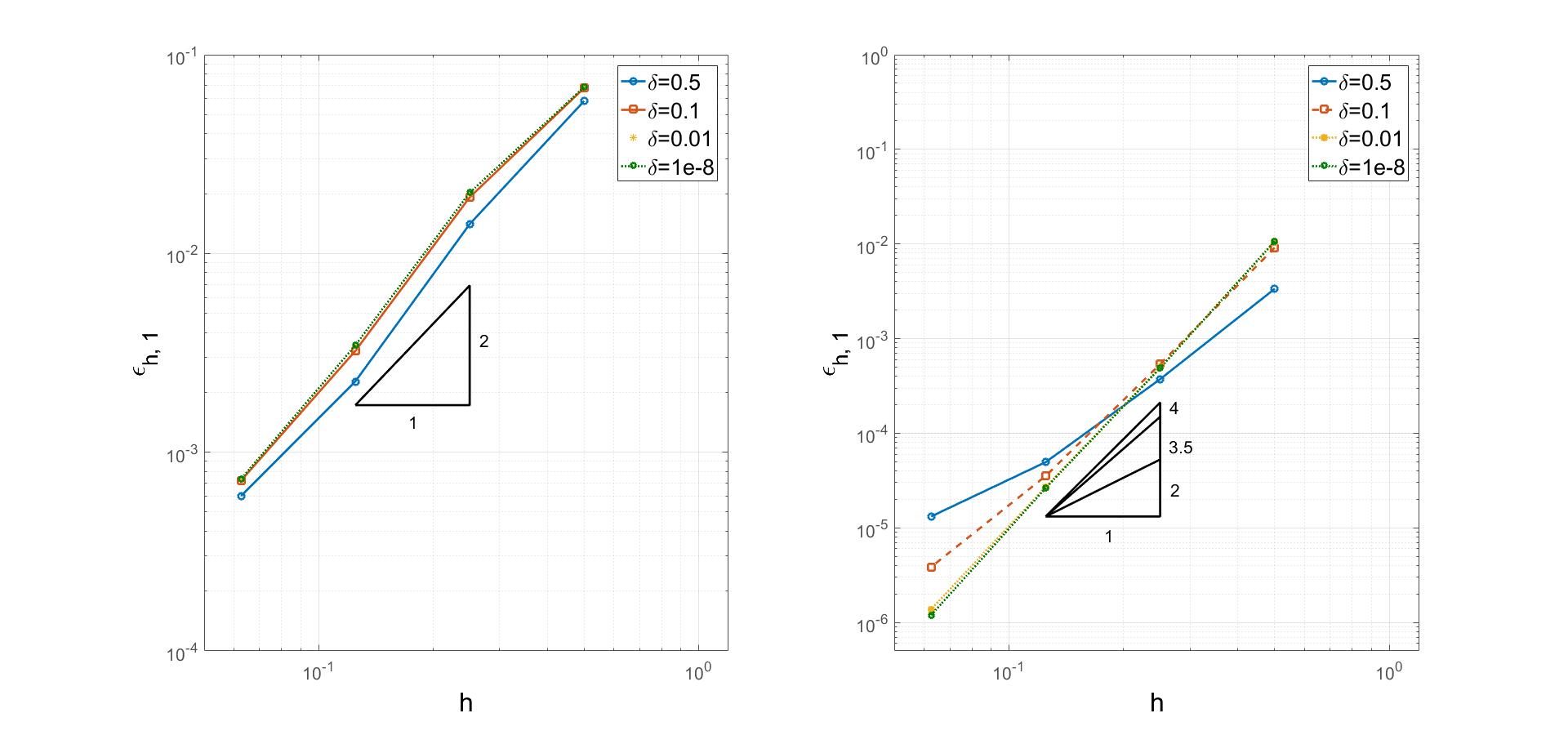} 
\caption{Test 7.3. Convergence plots for the first eigenvalue and different values of the diffusivity. Left plot: $k=1$. Right plot: $k=2$.}
\label{test3_first}
\end{figure}

\begin{figure}[!h]
\centering
\includegraphics[scale=0.2]{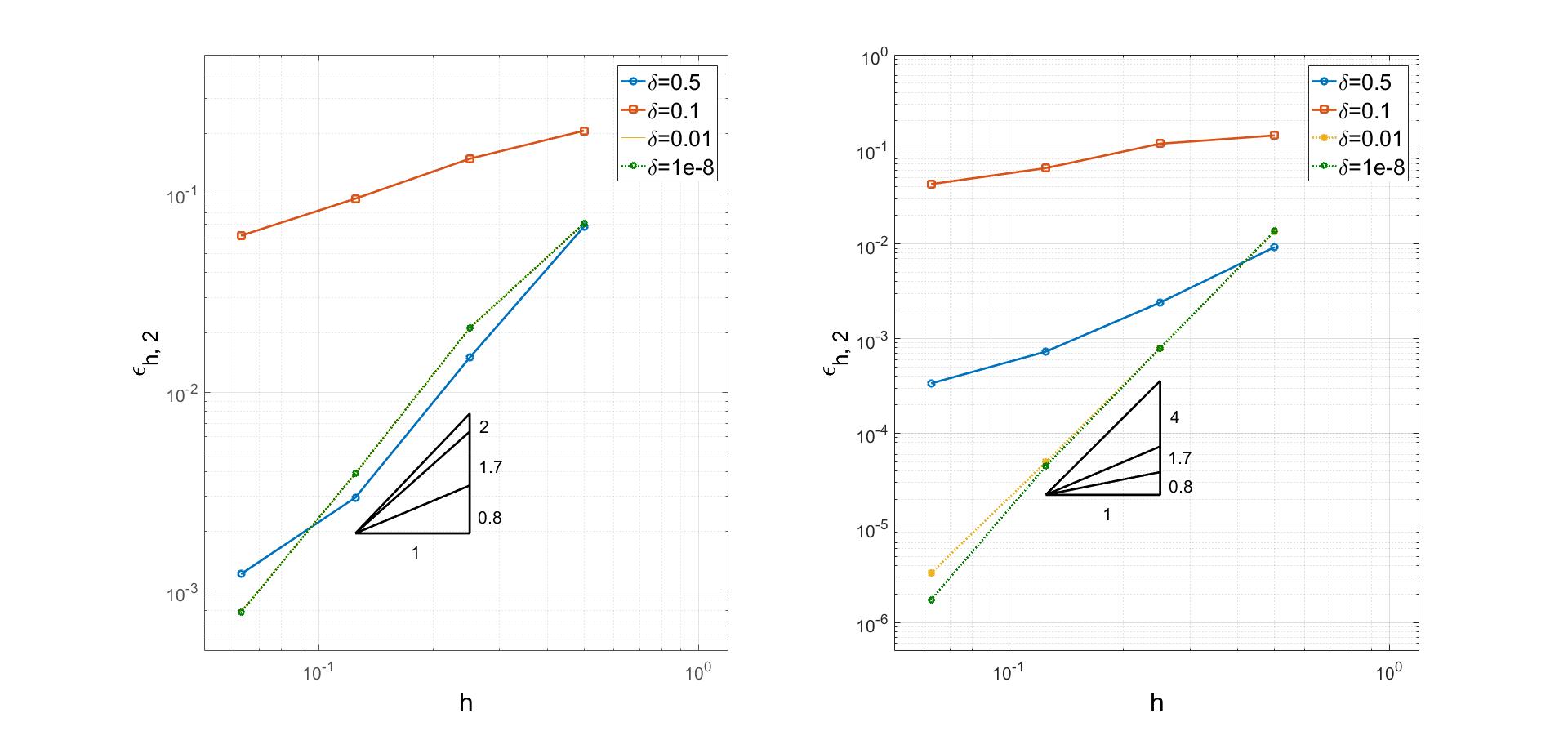} 
\caption{Test 7.3. Convergence plots for the second eigenvalue and different values of the diffusivity. Left plot: $k=1$. Right plot: $k=2$.}
\label{test3_second}
\end{figure}

We can observe, in accordance with Theorem \ref{thm:stab}, different rates of convergence that are
determined by the polynomail order of the method and by the regularity of the corresponding exact eigenfunctions \cite{dauge}. Taking this into account, the method is overall optimal, and thus stable
with respect to discontinuities in the diffusivity tensor.
\end{test}

\begin{test}
\label{test4}
In the last test we solve the eigenvalue problem  with
Neumann boundary conditions on the non-convex L-shaped domain
$\Omega = \Omega_{big} \setminus \Omega_{small}$, where $\Omega_{big}$ is the square $(-1,1)^2$ and $\Omega_{small}$ is the square $(0,1) \times (-1, 0)$.  
Also this test problem is taken from the benchmark singular
solution set \cite{dauge}. 
We apply the Virtual Element method \eqref{eq:discreteEigPbm2} with the scalar stabilization \eqref{eq:original}. 
We use the sequence of Voronoi decomposition of the domain $\Omega$ in Figure \ref{L-shaped}.
The convergence results relative to the first and the third eigenvalues are shown in Figure \ref{test4_first-third}.
For the first eigenvalue we observe a lower rate of convergence due to
the fact that the corresponding eigenfunction is in $H^{1+r}$, with
$r = 2/3 - \varepsilon$ for any $\varepsilon >0$ (see \cite{dauge}), while the third eigenfunction is analytical therefore we obtain the optimal order of convergence. The error slopes validate the predicted
convergence rates stated in Section \ref{sub:convstab}, and confirm the optimality of the
method also on non-convex domains.
\begin{figure}[!h]
\centering
\includegraphics[scale=1]{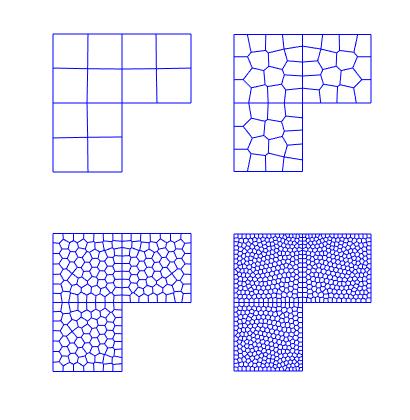} 
\caption{Test 7.4.  Adopted family of meshes for the L-shaped domain $\Omega$.} 
\label{L-shaped}
\end{figure}

\begin{figure}[!h]
\centering
\includegraphics[scale=0.2]{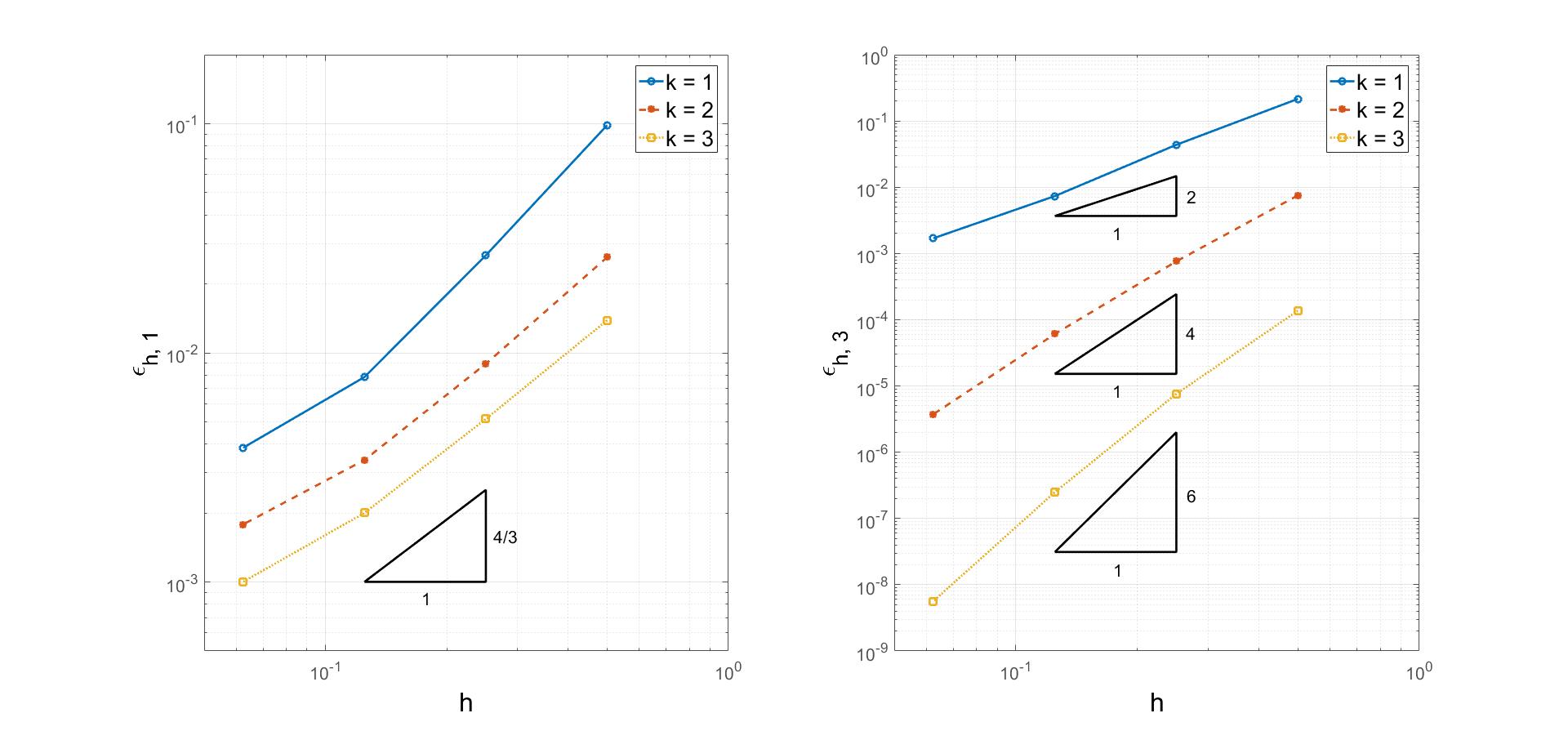} 
\caption{Test 7.4. Convergence plot for the eigenvalues for the L-shaped domain for the sequence of meshes in Figure \ref{L-shaped} and $k=1,2,3$. Left plot: first eigenvalue. Right plot: third eigenvalue.}
\label{test4_first-third}
\end{figure}

\end{test}

\section{Conclusions}
\label{sc:end}

We have analyzed the VEM approximation of elliptic eigenvalue problems. 
We proved the method is of optimal order both in the approximation of the eigenfunctions and of the 
eigenvalues. A wide set of numerical test confirm the theoretical results. 
Further development consists in studying the VEM approximation  
of eigenvalue problems in mixed form, {\itshape {a posteriori}} error estimates 
and convergence of adaptive VEM for eigenvalue problems.

\bibliographystyle{plain}
\bibliography{ref}

\end{document}